\title{Random walks on rings and modules}
\author{Arvind Ayyer}
\address{Arvind Ayyer \\ Department of Mathematics\\
	Indian Institute of Science\\
	Bangalore 560012\\ India}
\email{arvind@math.iisc.ernet.in}
\author{Benjamin Steinberg}
\address{Benjamin Steinberg \\Department of Mathematics\\
    City College of New York\\
    Convent Avenue at 138th Street\\
    New York, New York 10031\\
    USA}
    \thanks{The first author was partially supported by the UGC Centre for Advanced Studies and by Department of Science and Technology grant EMR/2016/006624.
    The second author was supported by United States-Israel Binational Science Foundation \#2012080 and  by NSA MSP \#H98230-16-1-0047.}
\email{bsteinberg@ccny.cuny.edu}
\newcommand{\ann}{\mathrm{ann}}
\newcommand{\Aff}{\mathrm{Aff}}
\newcommand{\Stab}{\mathop{\mathrm {St}}\nolimits}
\newcommand{\Res}{\mathop{\mathrm{Res}}\nolimits}
\newcommand{\Ind}{\mathop{\mathrm{Ind}}\nolimits}
\newcommand{\JJ}{\mathrel{\mathscr J}} 
\newcommand{\RR}{\mathrel{\mathscr R}} 
\newcommand{\LL}{\mathrel{\mathscr L}} 
\newcommand{\inv}{^{-1}}
\newcommand{\p}{\varphi}
\newcommand{\pinv}{{\p \inv}}
\newcommand{\ov}[1]{\ensuremath{\overline {#1}}}
\newcommand{\wh}{\widehat}
\newcommand{\Hom}{\mathop{\mathrm{Hom}}\nolimits}
\newcommand{\chaptermarkb}[1]%
   {\markboth{{#1}}{}}
\newcommand{\sectionmarkb}[1]%
   {\markright{{#1}}{}}
\newcommand{\drawLinewithBG}[2]
\newtheorem{Thm}{Theorem}[section]
\newtheorem{Prop}[Thm]{Proposition}
\theoremstyle{definition}
\theoremstyle{remark}
\newtheorem{Rmk}[Thm]{Remark}}
\newtheorem{Cor}[Thm]{Corollary}
{\theoremstyle{remark}
}
{\theoremstyle{remark}
\newtheorem{example}[Thm]{Example}

{\theoremstyle{remark}
}
{\theoremstyle{remark}
}
{\theoremstyle{remark}
}

\date{\today}

\begin{document}

\begin{abstract}
We consider two natural models of random walks on a module $V$ over a finite commutative ring $R$ driven simultaneously by addition of random elements in $V$, and multiplication by random elements in $R$. In the coin-toss walk, either one of the two operations is performed depending on the flip of a coin. In the affine walk, random elements $a \in R,b \in V$ are sampled independently, and the current state $x$ is taken to $ax+b$.
For both models, we obtain the complete spectrum of the transition matrix from the representation theory of the monoid of all affine maps on $V$ under a suitable hypothesis on the measure on $V$ (the measure on $R$ can be arbitrary).
\end{abstract}

\subjclass[2010]{60J10, 20M30, 13M99, 05E10, 60C05}
\keywords{Random walks, rings, modules, monoids, representation theory}
\maketitle

\section{Introduction}
\label{sec:intro}
Random walks driven simultaneously by addition and multiplication of the form $X_{n+1} = a_n X_n + b_n$, where $a_n,b_n$ are independent, have been considered in the past. Such chains seem to have been first studied  by Chung, Diaconis and Graham \cite{chung-diaconis-graham-1987} on the field $\mathbb{Z}/p\mathbb{Z}$, where $p$ is a prime. The motivation for their study came from the quest for efficient generation of quasirandom numbers, and those authors studied the mixing times of these chains for special choices of the distributions of $a_n$ and $b_n$. For different choices of these distributions, Hildebrand~\cite{hildebrand-1993, hildebrand-1996,hildebrand-2009} has also calculated the mixing times. Asci and
Hildebrand-McCollum generalized some of these results to the vector space $(\mathbb{Z}/p\mathbb{Z})^d$ \cite{asci-2001, hildebrand-mccollum-2008, asci-2009a,asci-2009b} with the restriction that $a_n$ is a deterministic matrix. In a slightly different direction, questions of convergence of affine random walks with real vectors are considered in~\cite{asci-2013}.

Another class of random walks driven by both these operations has the flavor
\begin{equation}
\label{coin-toss}
X_{n+1} =
\begin{cases}
X_n+b_n & \text{with probability $\alpha$}, \\
a_n X_n & \text{with probability $1-\alpha$},
\end{cases}
\end{equation}
where again $a_n$ and $b_n$ are chosen independently from some distribution.
Bate and Connor~\cite{bate-connor-2014} have determined the mixing times of such chains on $\mathbb{Z}/m\mathbb{Z}$ with $m$ odd and the probability $1-\alpha$ decreasing to zero as $m$ increases. Ayyer and Singla~\cite{AyyerSingla} have considered such chains on a finite commutative ring $R$ where the distribution for $b_n$ is uniform on $R$ and that of $a_n$ is arbitrary. They have determined the stationary distribution, the  spectrum of the transition matrix and the mixing time for finite chain rings.

We consider here the more abstract setting of random walks on finite left $R$-modules $V$ over finite commutative rings $R$.
Our methods will work both for the chains of the form $X_{n+1} = a_n X_n + b_n$, which we call the affine random walk, as well
as those of the chain described in \eqref{coin-toss}, which we call a coin-toss walk.
In both cases, $a_n$ and $b_n$ are independent elements of $R$ and $V$ respectively.  A word about notation: we shall use the phrase `random walk' to talk about such chains even though these chains are not reversible. Technically, random walks refer to reversible Markov chains, but this terminology has been established by various authors working on random walks on monoids (cf.~\cite{Rosenblatt,Brown1,Brown2,bjorner2, DiaconisBrown1,DiaconisICM,mobius1,mobius2,Saliolaeigen,ayyer_schilling_steinberg_thiery.2013,repbook}) and in the context of random affine mappings~\cite{chung-diaconis-graham-1987,bate-connor-2014} and so we shall continue to use it.
In this work, we focus on the spectral properties of these walks.
We plan to take up the study of probabilistic properties of our walks, such as the stationary distribution and the mixing time, in future. In particular, part of the motivation for our work is a systematic study of irreversible Markov chains. The intuition is that irreversible chains have faster mixing than reversible ones~\cite{DHN-2000, CH-2013, HM-2017, KJZ-2017}.

In this paper the term `ring' means unital ring.
Let $R$ be a finite commutative ring and $V$ a finite left $R$-module.  For both walks, we need probability distributions $P$ on $V$ and $Q$ on $R$.
As we shall see below, $Q$ can be arbitrary, but $P$ will have to satisfy a condition.
 The state space for both random walks will be the module $V$.

\subsection*{Coin-toss walk}
 At each step of the walk, we flip a coin which comes up heads with probability $\alpha$ and tails with probability $1-\alpha$.  If the result is heads, we move from $x\in V$ to $x+b$ with probability $P(b)$ and if the result is tails, we move from $x$ to $ax$ with probability $Q(a)$.

\subsection*{Affine walk}
 At each step of the walk, we independently choose $a\in R$ with probability $Q(a)$ and $b\in V$ with probability $P(b)$ and move to $ax+b$.  In other words, one step consists of first multiplying by an element of $R$ chosen randomly according to $Q$ and then adding an element of $V$ chosen according to $P$.

\bigskip
Both of these Markov chains can be viewed as random walks for the \emph{affine monoid} $\Aff(V)$ of $V$ where $\Aff(V)$ is the monoid of all mappings on $V$ of the form $x\mapsto ax+b$ with $a\in R$ and $b\in V$ with composition as the binary operation.  So the product of $ax+b$ and $cx+d$ is $acx+ad+b$.  Note that $\Aff(V)$ is the semidirect product of the multiplicative monoid $M(R)$ of $R$ with the additive group $V$, that is, $\Aff(V)= V\rtimes M(R)$, where $M(R)$ acts on $V$ via scalar multiplication.   We can view $P$ as a probability on $\Aff(V)$ supported on the translations $x\mapsto x+b$ and $Q$ as a probability on $\Aff(V)$ supported on the dilations $x\mapsto ax$.  The first model is then the random walk of $\Aff(V)$ on $V$ driven by the probability $\alpha P+(1-\alpha)Q$ and the second model is the random walk of $\Aff(V)$ on $V$ driven by the probability $PQ$ (where the product is convolution of measures).

To state our results, we need some definitions and notation. Recall that the set of invertible elements in a ring $R$ forms a group, known as the {\em group of units}.
Denote by $U(R)$ the group of units of $R$.  We make the convention that $R=\{0\}$ is a unital ring and that $U(R)=\{0\}$ is the trivial group.  Note that the only module over the zero ring is the zero module.
Two elements $r_1,r_2\in R$ are \emph{associates} if $r_1=ur_2$ with $u\in U(R)$.  It therefore seems natural to generalize this terminology to the module $V$ and so we say that $v_1,v_2\in V$ are \emph{associates} if $v_1=uv_2$ for some $u\in U(R)$. We shall in both models impose the additional assumption that associates are equally probable under $P$, that is, $P$ is constant on associates.  For example, this trivially holds for the uniform distribution. One can think of $V/U(R)$ as `projective space' and then we are asking that $P$ be a pullback of a measure on projective space.
We can also view $V/U(R)$ as the space of cyclic submodules of $V$, since, for a finite module $V$, $v_1,v_2\in V$ are associates if and only if $Rv_1=Rv_2$, that is, if and only if $v_1,v_2$ generate the same cyclic submodule.
Although this fact can be deduced from~\cite[Lemma~6.4]{Bass64},  we provide a proof of this for the reader's convenience that does not require as much background.

\begin{Prop}\label{p:cyclic.sub}
Let $V$ be a finite module over a finite  ring $R$.  Then, for $v,w\in V$, one has $Rv=Rw$ if and only if $U(R)v=U(R)w$ where $U(R)$ is the group of units of $R$.
\end{Prop}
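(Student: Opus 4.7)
One direction is immediate: if $U(R)v=U(R)w$, write $w=uv$ with $u\in U(R)$, so $Rw=R(uv)=Rv$ because $u$ is a unit.

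For the converse, assume $Rv=Rw$. Then there exist $a,b\in R$ with $v=aw$ and $w=bv$, whence $v=abv$ and $w=abw$, so $1-ab\in \ann(v)\cap\ann(w)$. Moreover $\ann(v)=\ann(w)$: if $rv=0$ then $rw=r(bv)=b(rv)=0$, and symmetrically. Writing $I:=\ann(w)$, the image $\bar a$ of $a$ in $R/I$ is therefore a unit with inverse $\bar b$. Now observe that if we can produce a genuine unit $u\in U(R)$ with $u\equiv a\pmod I$, we are done: $uw=aw+(u-a)w=v+0=v$, so $v\in U(R)w$ and hence $U(R)v=U(R)w$. Thus the real content of the proposition is the following unit-lifting claim: every unit of $R/I$ lifts to a unit of $R$.

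To establish the lift, I would invoke the classical structural fact that a finite commutative ring decomposes as a product of finite local rings, $R\cong R_1\times\cdots\times R_n$, under which $I=I_1\times\cdots\times I_n$ and $a=(a_1,\ldots,a_n)$. For each index $i$, if $I_i=R_i$ (so that the factor $R_i/I_i$ is trivial) take $u_i:=1\in U(R_i)$; otherwise $I_i$ is a proper ideal and hence contained in the unique maximal ideal $\mathfrak m_i$ of $R_i$, and since $\bar a_i$ is a unit of the local ring $R_i/I_i$ (whose maximal ideal is $\mathfrak m_i/I_i$) we must have $a_i\notin\mathfrak m_i$, so $a_i$ is already a unit of $R_i$ and we set $u_i:=a_i$. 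Then $u:=(u_1,\ldots,u_n)\in U(R)$ satisfies $u\equiv a\pmod I$, as required.

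The only genuine subtlety is the unit-lifting step, and the product decomposition dispatches it cleanly; an alternative structure-theorem-free route would patch unit representatives over the finitely many maximal ideals of $R$ via a Chinese-remainder style argument. One should also remember the authors' convention $U(\{0\})=\{0\}$, which is exactly what makes the degenerate factor $I_i=R_i$ work without fuss.
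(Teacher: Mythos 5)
Your argument is correct for \emph{commutative} finite rings, but the Proposition is stated (and in the paper's appendix, proved) for arbitrary finite rings, and your proof uses commutativity at several essential points: the step $w=bv=b(aw)=(ab)w$ silently commutes $b$ past $a$ (noncommutatively one only gets $(1-ba)w=0$, not $(1-ab)w=0$), the claim $\ann(v)=\ann(w)$ via $rw=r(bv)=b(rv)$ again commutes $r$ and $b$, and most decisively, the structure theorem $R\cong R_1\times\cdots\times R_n$ with each $R_i$ local is a theorem about commutative Artinian rings. The paper's own proof proceeds instead by exhibiting idempotents $e=(sr)^n$, $f=(rs)^n$ in $R$, showing right multiplication gives mutually inverse isomorphisms $Rf\to Re$ and $Re\to Rf$, and then invoking the Krull--Schmidt theorem to obtain $R(1-f)\cong R(1-e)$ and hence build a unit $u=r'+x$; nothing in that argument requires commutativity. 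So while your approach is perfectly adequate for the setting in which the paper actually applies the result (all the rings in the body of the paper are commutative), it proves a strictly weaker statement than the Proposition as written.

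It is worth noting that your reduction to the unit-lifting claim (every unit of $R/I$ lifts to a unit of $R$) inverts the logical direction taken in the paper: immediately after Proposition~\ref{p:cyclic.sub}, the authors \emph{derive} surjectivity of $U(R)\to U(R/I)$ as a consequence of the proposition, by noting that units of $R/I$ are exactly the generators of the cyclic $R$-module $R/I$. You prove unit-lifting first (via the local decomposition) and deduce the proposition from it. Both directions are sound in the commutative case and the equivalence is a nice observation, but since the paper wants the general finite-ring statement --- and cites it to Bass's stable range results --- the Krull--Schmidt route (or a direct appeal to Bass \cite[Lemma~6.4]{Bass64}) is what is actually needed.
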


We give a proof of Proposition~\ref{p:cyclic.sub} that relies only on the Krull-Schmidt theorem, following the second author's MathOverflow answer~\cite{mo211739}.

\begin{proof}[Proof of Proposition~\ref{p:cyclic.sub}]
Clearly, if $U(R)v=U(R)w$, then $Rv=Rw$. Turning to the converse, let $r,s\in R$
with $rv=w$ and $sw=v$. Since $R$ is finite, there exists $n>0$ such that $f=(rs)^n$ and $e=(sr)^n$ are idempotent ($n=|R|!$ will do). Note that $ev=v$ and $fw=w$. Let $r'=fre$ and $s'=esf$.  Notice that $r'v=w$. Moreover, $s'r'=esfre=es(rs)^nre=esr(sr)^ne=(sr)^{3n+1}=(sr)^{n+1}$ and so $Rs'r'=Re$.
Therefore, as $s'\in Rf$, right multiplication by $r'$ gives a surjective $R$-module homomorphism $Rf\to Re$.  Similarly right multiplication by $s'$ gives a surjective $R$-module homomorphism $Re\to Rf$. By finiteness of $R$ we conclude that both these homomorphisms are isomorphisms. It follows from the Krull-Schmidt theorem~\cite[Theorem~1.4.6]{benson} and the isomorphisms $Re\cong Rf$ and $Re\oplus R(1-e)\cong R\cong Rf\oplus R(1-f)$ that $R(1-f)\cong R(1-e)$.
Such an isomorphism $R(1-f)\to R(1-e)$ is given via right multiplication by an element $x\in (1-f)R(1-e)$.

Consider $u=r'+x$. Then $u$ is a unit since right multiplication by $u$ gives an isomorphism from $R=Rf\oplus R(1-f)$ to $R=Re\oplus R(1-e)$ (as it is the direct sum of the two isomorphisms $Rf\to Re$ and $R(1-f)\to R(1-e)$) and in a finite ring an element with a one-sided inverse is invertible. Also $uv=(r'+x)v=(r'+x)ev=r'ev=r'v=w$ because $x\in (1-f)R(1-e)$ implies $xe=0$. Thus $w\in U(R)v$.   This completes the proof.
\end{proof}

Notice that Proposition~\ref{p:cyclic.sub} implies that the natural map $U(R)\to U(R/I)$ is surjective for any ideal $I$ as the generators of the cyclic module $R/I$ are the units of $R/I$.

Denote by $\wh{A}$ the group of characters of an abelian group $A$, i.e., $\wh{A}=\Hom(A,U(\mathbb C))$. We shall denote by $\mathbf 1_A$ the \emph{trivial character} of $A$ mapping all of $A$ to $1$.  More generally, $\mathbf 1_G$ will denote the \emph{trivial representation} of any (not necessarily abelian) group $G$.

We write $\wh{V}$ for the character group of the additive group $(V,+)$ and $\widehat{U(R)}$ for the group of characters of the multiplicative group $U(R)$.  Since the correspondence $A\mapsto \wh A$ is contravariantly functorial and $R$ is commutative, the action of $R$ on $V$ by multiplication induces an action of $R$ on $\wh{V}$ by endomorphisms.  More precisely, if $\chi\in \wh{V}$ and $r\in R$, then $r\chi\colon V\to U(\mathbb C)$ is given by $(r\chi)(v) = \chi(rv)$ for $v\in V$.  In fact, this action turns $\wh{V}$ into a (finite) $R$-module since $(r+r')\chi(v)=\chi((r+r')v)=\chi(rv+r'v)=\chi(rv)\chi(r'v)=r\chi(v)\cdot r'\chi(v)$. Note that the  abelian group structure of the module $\wh V$ is being written multiplicatively because the operation is pointwise multiplication.   We call $\wh{V}$ the \emph{dual module} of $V$. Of course, $|\wh V|=V$ and $\wh{\wh V}$ is naturally isomorphic to $V$.  See~\cite{JWood} for more on the Pontryagin dual of a finite module.

Finally, recall that the {\em transition matrix} of a Markov chain on a finite state space $\Omega = \{\omega_1,\dots,\omega_n\}$ (in some ordering)
is the $n \times n$ matrix whose $(i,j)$-entry is given by the one-step probability of making a transition from $\omega_i$ to $\omega_j$.
Since the rows of the transition matrix sum to 1, it is said to be {\em row-stochastic}.
The next proposition gives sufficient conditions for the walks we are considering to be irreducible and aperiodic.  They are by no means necessary.

\begin{Prop}
\label{p:irred}
Let $P$ be a probability distribution on $V$ and $Q$ a probability distribution on $R$.
\begin{enumerate}
\item In the coin-toss walk with heads probability $0<\alpha<1$, if the support of $P$ generates the additive group of $V$, the walk is irreducible.  If, in addition, the monoid generated by the  support of $Q$ contains $0$, then the coin-toss walk is aperiodic.
\item If the support of $P$ generates the additive group of $V$ and the support of $Q$ contains $1$, then the affine walk is irreducible.  If, moreover, the submonoid generated by the support of $Q$ contains $0$, then the walk is aperiodic.
\end{enumerate}
\end{Prop}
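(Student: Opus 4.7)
The plan is to handle irreducibility uniformly for both walks, then aperiodicity separately.

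Irreducibility in both cases rests on the elementary fact that in the finite abelian group $V$, a generating subset $S$ already satisfies $V=\{b_1+\cdots+b_m:m\ge 0,\,b_i\in S\}$, since $-b$ equals a positive multiple of $b$. Applied with $S=\supp(P)$: in the coin-toss walk we reach any $x+v$ from $x$ by $m$ consecutive heads moves, with positive probability $\alpha^m\prod P(b_i)$; in the affine walk the same $v$ can be added by $m$ moves of the form $(1,b_i)$, each of positive probability $Q(1)P(b_i)$ thanks to $1\in\supp(Q)$. In both cases every state is reachable from every other, so the walk is irreducible.

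For aperiodicity of the coin-toss walk, note that the state $0\in V$ carries a self-loop of probability $\ge 1-\alpha>0$, because any tails move sends $0\mapsto a\cdot 0=0$. Hence $0$ has period $1$, and irreducibility promotes this to aperiodicity. (In fact the hypothesis on the submonoid generated by $\supp(Q)$ is not strictly needed here, given $0<\alpha<1$.)

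For the affine walk, aperiodicity is the real obstacle. The plan is to produce a single state $c\in V$ with two coprime return times. From the iteration formula
\[
X_n=(a_n\cdots a_1)X_0+\sum_{j=1}^n (a_n\cdots a_{j+1})b_j,
\]
pick $a_1^*,\ldots,a_k^*\in\supp(Q)$ with $a_1^*\cdots a_k^*=0$ and arbitrary $b_1^*,\ldots,b_k^*\in\supp(P)$. The resulting $k$-step composite affine map is constant, $x\mapsto c$, where $c:=\sum_{j=1}^{k}(a_k^*\cdots a_{j+1}^*)b_j^*$; in particular $c$ has a return of length $k$ with positive probability. Now prepend one move $(1,b^*)$ with $b^*\in\supp(P)$: the new leading multiplier is $a_k^*\cdots a_1^*\cdot 1=0$, and the only new adder contribution, $(a_k^*\cdots a_1^*)b^*$, also vanishes, so the $(k+1)$-step composite is still the constant map $x\mapsto c$. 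This yields a second return to $c$ of length $k+1$, again of positive probability. Since $\gcd(k,k+1)=1$, the period at $c$ is $1$, and irreducibility upgrades this to aperiodicity of the whole chain. The main technical point is this bookkeeping, which ultimately reduces to the observation that the prepended $b^*$ gets multiplied by the vanishing product $a_k^*\cdots a_1^*$.
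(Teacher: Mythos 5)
Your proof is correct, and it takes a genuinely different route from the paper's on the aperiodicity step. The paper establishes irreducibility by the same reasoning you use (translations appear in the support of convolution powers of the driving measure), but for aperiodicity it argues that once a constant map appears in the support of some convolution power, irreducibility forces a convolution power whose support contains \emph{all} constant maps (citing~\cite[Proposition~2.5]{ayyer_schilling_steinberg_thiery.2013}), so that the corresponding power of the transition matrix is strictly positive. Your argument instead exhibits a single state $c$ with two return times $k$ and $k+1$ of positive probability via the iteration formula $X_n=(a_n\cdots a_1)X_0+\sum_j (a_n\cdots a_{j+1})b_j$, prepending a trivial $(1,b^*)$-move to the constant composite. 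This is more elementary (no appeal to the external positivity lemma) and arguably more transparent about where $0\in\langle\supp Q\rangle$ enters. Your side observation is also correct and worth noting: for the coin-toss walk the state $0$ has a self-loop of probability $\ge 1-\alpha>0$ via any tails move, so the hypothesis on $\supp Q$ is not needed for aperiodicity once $\alpha<1$ and the walk is irreducible; this is consistent with the paper's remark that the stated conditions are sufficient but by no means necessary.
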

\begin{proof}
For the first item, one can get from $v_1$ to $v_2$ with non-zero probability because the translation  $x\mapsto x+v_2-v_1$ is in the support of some convolution power of $\lambda=\alpha P+(1-\alpha)Q$ by our assumption on $P$.  If, in addition, $0$ is in the submonoid generated by the support of $Q$, then some convolution power of $\lambda$ contains a constant map in its support and hence, by irreducibility, there is a convolution power of $\lambda$ that contains all the constant maps in its support, cf.~\cite[Proposition~2.5]{ayyer_schilling_steinberg_thiery.2013}. The corresponding power of the transition matrix will be strictly positive. The argument for the second item is nearly identical, the assumption that $1$ is in the support of $Q$ being required to guarantee we can get any translation in the support of some convolution power of $PQ$.
\end{proof}

Since all the entries in the transition matrix are non-negative and at most $1$, all eigenvalues will have absolute value bounded above by $1$.

We can now state the main result of this article.

\begin{Thm}\label{t:main2}
Let $R$ be a finite commutative ring, $V$ a finite $R$-module, $P$ a probability on $V$ that is constant on associates and $Q$ a probability on $R$.
Then the eigenvalues for the transition matrices of both the coin-toss walk and the affine walk on $V$ are indexed by pairs $(W,\rho)$ where:
\begin{enumerate}
\item $W=R\chi$ is a cyclic $R$-submodule of $\wh{V}$;
\item and $\rho\in \wh{U(R/\ann(W))}$.
\end{enumerate}
The corresponding eigenvalue for the coin-toss walk is $\alpha\wh P(\chi)+(1-\alpha)\wh Q(\rho)$ and for the affine walk is $\wh P(\chi)\wh Q(\rho)$ where
\begin{align*}
\wh P(\chi) &= \sum_{b\in V}P(b)\chi(b)\\
\wh Q(\rho) &= \sum_{a\in U(R)+\ann(W)} Q(a)\rho(a+\ann(W))
\end{align*}
In both cases, the eigenvalue occurs with multiplicity one.
\end{Thm}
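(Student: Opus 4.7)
The plan is to exploit the Fourier decomposition $\mathbb{C}[V] = \bigoplus_{\chi\in\wh V}\mathbb{C}\chi$ and put $T$ in block-triangular form with respect to a filtration coming from the poset of cyclic submodules of $\wh V$.

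First I would compute the action of $T$ on a single character. Using $\chi(x+b)=\chi(x)\chi(b)$ and $\chi(ax)=(a\chi)(x)$, one finds
\[
T\chi \;=\; \alpha\,\wh P(\chi)\,\chi + (1-\alpha)\sum_{a\in R}Q(a)\,a\chi
\qquad\text{or}\qquad
T\chi \;=\; \wh P(\chi)\sum_{a\in R}Q(a)\,a\chi
\]
for the coin-toss and affine walks respectively. In both cases $T\chi$ lies in $U_W:=\mathbb{C}[W]$ where $W=R\chi$, so $U_W$ is $T$-invariant for every cyclic $R$-submodule $W$ of $\wh V$. Choosing any linear extension $W_0,W_1,\ldots$ of the inclusion order on cyclic submodules, the subspaces $\mathcal F_k:=\sum_{i\le k}U_{W_i}=\mathbb{C}[W_0\cup\cdots\cup W_k]$ are $T$-invariant, and a short check (a character $\chi\in W_k$ with $R\chi\subsetneq W_k$ must lie in $W_j$ for some $j<k$) shows that $\mathcal F_k/\mathcal F_{k-1}\cong\mathbb{C}[V_{W_k}]$, where $V_W:=\{\chi\in W:R\chi=W\}$ is the set of generators of $W$. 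By Proposition~\ref{p:cyclic.sub}, fixing a generator $\chi$ of $W_k$ identifies $V_{W_k}$ with the torsor $U(\bar R_k)\chi$ for the group $G:=U(\bar R_k)$, where $\bar R_k:=R/\ann(W_k)$.

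The central step is determining the operator $\overline T$ induced on $\mathcal F_k/\mathcal F_{k-1}\cong\mathbb{C}[G]$. The key observation is that $a\chi$ generates $W_k$ iff $\bar a\in U(\bar R_k)$; otherwise $R(a\chi)=aW_k\subsetneq W_k$, so $a\chi\in\mathcal F_{k-1}$. Combined with the corollary to Proposition~\ref{p:cyclic.sub} that $U(R)\to U(\bar R_k)$ is surjective, the contributing $a$ form exactly $U(R)+\ann(W_k)$. Using the hypothesis in its Fourier form $\wh P(u\chi)=\wh P(\chi)$ for $u\in U(R)$ (which follows from $P$ being constant on associates via a change of variables in the sum defining $\wh P$), modulo $\mathcal F_{k-1}$ the operator becomes, under $u\chi\leftrightarrow u$,
\[
\overline T \;=\; \alpha\,\wh P(\chi)\cdot I + (1-\alpha)L_{\bar Q|_G}
\qquad\text{or}\qquad
\overline T \;=\; \wh P(\chi)\cdot L_{\bar Q|_G},
\]
where $L_{\bar Q|_G}$ is left multiplication in $\mathbb{C}[G]$ by $\sum_{v\in G}\bar Q(v)v$ and $\bar Q$ is the pushforward of $Q$ to $\bar R_k$.

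Since $G$ is a finite abelian group, $\mathbb{C}[G]$ is diagonalized by its characters: the eigenvectors $e_\rho:=\sum_{u\in G}\rho(u^{-1})u$ satisfy $L_{\bar Q|_G}e_\rho=\bigl(\sum_{v\in G}\bar Q(v)\rho(v)\bigr)e_\rho=\wh Q(\rho)\,e_\rho$, yielding $|G|=|U(\bar R_k)|$ eigenvalues of $\overline T$, one for each $\rho\in\wh G$, each of multiplicity one and matching the formula in the theorem. Assembling via the filtration, the characteristic polynomial of $T$ is the product of those on the composition factors, and the identity $\sum_W|U(\bar R_W)|=|\wh V|=|V|$ (each character of $V$ is a generator of a unique cyclic submodule) ensures every eigenvalue is accounted for. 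The main obstacle is the bookkeeping in the third step: one must carefully verify that every $a\chi$ with $\bar a\notin U(\bar R_k)$ genuinely lies in $\mathcal F_{k-1}$, which is precisely the point at which both Proposition~\ref{p:cyclic.sub} and the surjectivity of $U(R)\to U(R/\ann(W))$ are indispensable.
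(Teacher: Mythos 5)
Your argument is correct and establishes the theorem, but it takes a genuinely different route from the paper. The paper's proof runs entirely through the representation theory of the affine monoid $\Aff(V)$: it classifies the simple $\mathbb{C}\Aff(V)$-modules via Clifford--Munn--Ponizovskii theory (Theorem~\ref{t:CMP}), computes the composition factors of $\mathbb{C}V$ as a $\mathbb{C}\Aff(V)$-module using the M\"obius-function multiplicity formula (Theorem~\ref{t:mult}, leading to Theorem~\ref{t:comp.factors}), and then block-triangularizes $p(P,Q)$ along a composition series, with $P$ acting as a scalar on each factor by a Schur's-lemma argument (Proposition~\ref{p:add.part}) and $Q$ diagonalizing via Proposition~\ref{p:decomp.over.mult}. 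You instead work on the Fourier side $\mathbb{C}^V = \bigoplus_{\chi\in\wh V}\mathbb{C}\chi$, observe directly that the transition operator sends $\chi$ into the span of $R\chi$, filter by a linear extension of the inclusion order on cyclic submodules of $\wh V$, identify each subquotient with the regular representation of the finite abelian group $U(R/\ann(W))$ via Proposition~\ref{p:cyclic.sub} and the surjectivity of $U(R)\to U(R/\ann(W))$, and diagonalize by ordinary abelian Fourier analysis. Your route is considerably more elementary and self-contained (it bypasses monoid representation theory entirely and even avoids the detour through the auxiliary Theorem~\ref{t:main} stated in terms of idempotents and orbits), and it would extend verbatim to the paper's more general Theorem~\ref{t:eigenvalues} for an arbitrary polynomial $p(P,Q)$, since on each subquotient $P$ acts as a scalar and $Q$ as a commuting group-algebra convolution. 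What the paper's approach buys, beyond the theorem itself, is the explicit structure theory of $\Aff(V)$ and its irreducibles, which is one of its advertised contributions (a Markov-chain application of monoid representation theory beyond the upper-triangularizable case); your argument quietly rediscovers the relevant composition series of $\mathbb{C}V$ without naming it as such.
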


Note that in the above theorem statement, the value $P(\chi)$ depends only on the cyclic submodule generated by $\chi$ and not on $\chi$ itself.
It is an immediate corollary that, for generic choices of the parameters, the transition matrix is diagonalizable.

 The proof of Theorem~\ref{t:main2} is based on a careful analysis of the representation theory of the affine monoid $\Aff(V)$.  The use of monoid representation theory (outside of groups) to analyze Markov chains began with the work of Bidigare, Hanlon and Rockmore~\cite{BHR}, followed by work of Brown and Diaconis~\cite{DiaconisBrown1,Brown1,Brown2} and then others~\cite{bjorner1,bjorner2,DiaconisAthan,GrahamChung,mobius1,mobius2,ayyer_klee_schilling.2013,AyyerKleeSchilling,ayyer_schilling_steinberg_thiery.sandpile.2013,ayyer_schilling_steinberg_thiery.2013,Saliolaeigen,repbook}.  An introduction to these methods can be found in~\cite[Chapter~14]{repbook}.  All the papers cited above exploit the feature that the monoids in question only have one-dimensional irreducible representations over the field of complex numbers (or, equivalently, are faithfully representable by upper triangular matrices over the complex numbers~\cite{AMSV,repbook}).
 One novel element in this work is that the monoid in question, the affine monoid, has irreducible representations of higher dimensions.  This is the first article, to the best of our knowledge, to use the representation theory of a monoid that is neither a group, nor faithfully representable by upper triangular matrices, to analyze Markov chains.

We give an example to demonstrate Theorem~\ref{t:main2}.

\begin{example}
Let $R$ be the field $\mathbb{Z}/2\mathbb{Z}$ and $V$ the vector space $R^2$. Order the elements of $V$ as
$((0, 0), (0, 1), (1, 0), (1, 1))$.
The probability distribution on $R$ is $Q = (q_0,q_1)$ and that on $V$ is
$P = (p_{i,j})_{0 \leq i,j \leq 1}$. Being constant on associates forces no condition on $P$ for this simple example.
So the transition matrix of the affine walk is
{\small \[
\left(
\begin{array}{cccc}
 p_{0,0}  & p_{0,1}  & p_{1,0}  & p_{1,1}  \\
 q_0 p_{0,0}+q_1 p_{0,1} & q_1 p_{0,0}+q_0 p_{0,1} & q_0 p_{1,0}+q_1 p_{1,1} & q_1 p_{1,0}+q_0 p_{1,1} \\
 q_0 p_{0,0}+q_1 p_{1,0} & q_0 p_{0,1}+q_1 p_{1,1} & q_1 p_{0,0}+q_0 p_{1,0} & q_1 p_{0,1}+q_0 p_{1,1} \\
 q_0 p_{0,0}+q_1 p_{1,1} & q_0 p_{0,1}+q_1 p_{1,0} & q_1 p_{0,1}+q_0 p_{1,0} & q_1 p_{0,0}+q_0 p_{1,1} \\
\end{array}
\right),
\]}
and its eigenvalues are given by
\begin{align*}
(1-\alpha)q_1 & \left(p_{0,0}+ p_{0,1}-p_{1,0}-p_{1,1}\right),
q_1 \left(p_{0,0}-p_{0,1}+p_{1,0}-p_{1,1}\right), \\
& q_1 \left(p_{0,0}-p_{0,1}-p_{1,0}+p_{1,1}\right)
\text{ and } 1.
\end{align*}
The transition matrix of the coin-toss walk is
{\footnotesize\[
\left(
\begin{array}{cccc}
 (1-\alpha) +\alpha  p_{0,0} & \alpha  p_{0,1} & \alpha  p_{1,0} & \alpha  p_{1,1} \\
 (1-\alpha) q_0 +\alpha  p_{0,1} & (1-\alpha) q_1+\alpha  p_{0,0} & \alpha  p_{1,1} & \alpha  p_{1,0} \\
 (1-\alpha) q_0 +\alpha  p_{1,0} & \alpha  p_{1,1} & (1-\alpha) q_1+\alpha  p_{0,0} & \alpha  p_{0,1} \\
 (1-\alpha) q_0 +\alpha  p_{1,1} & \alpha  p_{1,0} & \alpha  p_{0,1} & (1-\alpha) q_1+\alpha  p_{0,0} \\
\end{array}
\right),
\]}
and its eigenvalues are $1$,
\begin{align*}
&\alpha \left(p_{0,0}+p_{0,1}-p_{1,0}-p_{1,1}\right)+(1-\alpha) q_1, \\
& \alpha  \left(p_{0,0}-p_{0,1}+p_{1,0}-p_{1,1}\right)+(1-\alpha) q_1, \\
\text{and } & \alpha  \left(p_{0,0}-p_{0,1}-p_{1,0}+p_{1,1}\right)+(1-\alpha) q_1.
\end{align*}
\end{example}

Theorem~\ref{t:main2} has a nice reformulation for Frobenius rings when $V$ is the ring $R$ itself.  This includes rings of the form $\mathbb Z/n\mathbb Z$.  Recall that an Artinian ring $R$ (with Jacobson radical $J(R)$) is \emph{Frobenius}  if $R/J(R)$ is isomorphic to the socle of $R$ as a left module and as a right module. (Recall that the socle of a module is its largest semisimple submodule.)  It is shown in~\cite{JWood} that a finite ring $R$ is Frobenius if and only if $R\cong \wh{R}$ as a left $R$-module.  A character $\chi\in \wh{R}$ such that $r\mapsto r\chi$ is an $R$-module isomorphism is called a \emph{generating character}~\cite{JWood}.  For example, $R=\mathbb Z/n\mathbb Z$ is Frobenius and a generating character is given by $\chi(m) = e^{2\pi im/n}$ for $m\in\mathbb Z/n\mathbb Z$.

\begin{Thm}\label{t:main3}
Let $R$ be a finite commutative Frobenius ring,  $P$ a probability on $R$ that is constant on associates and $Q$ a probability on $R$.  Let $\chi\in \wh R$ be a generating character.  Then the eigenvalues for the transition matrices of both the coin-toss walk and the affine walk on $R$ are indexed by pairs $(W,\rho)$ where:
\begin{enumerate}
\item $W=Rb$ is a principal ideal;
\item and $\rho\in \wh{U(R/\ann(b))}$.
\end{enumerate}
The corresponding eigenvalue for the coin-toss walk is
\[
\alpha\cdot\sum_{r\in R}P(r)\chi(br)+(1-\alpha)\cdot \!\!\!\!\!\!
\sum_{r\in U(R)+\ann(b)}Q(r)\rho(r+\ann(b))
\] and for the affine walk is
\[
\left(\sum_{r\in R}P(r)\chi(br) \right) \cdot \left(\sum_{r\in U(R)+\ann(b)}Q(r)\rho(r+\ann(b)) \right).
\]
In both cases, the eigenvalue occurs with multiplicity one.
\end{Thm}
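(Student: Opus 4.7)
The plan is to deduce Theorem~\ref{t:main3} directly from Theorem~\ref{t:main2} applied with $V=R$, using the Frobenius hypothesis to translate the parametrization of eigenvalues through the isomorphism $R\cong \wh R$ given by the generating character.

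Since $R$ is Frobenius and $\chi$ is a generating character, the map $\varphi\colon R\to \wh R$ defined by $\varphi(r)=r\chi$ is an isomorphism of left $R$-modules. Consequently, $\varphi$ induces a bijection between the cyclic $R$-submodules of $R$ (that is, the principal ideals $Rb$) and the cyclic $R$-submodules of $\wh R$; concretely, $Rb$ corresponds to $W=R(b\chi)$. This identifies item~(1) of Theorem~\ref{t:main3} with item~(1) of Theorem~\ref{t:main2}.

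Next I would verify that $\ann(W)=\ann(b)$. An element $r\in R$ lies in $\ann(b\chi)$ exactly when $r(b\chi)=(rb)\chi=\varphi(rb)$ is the trivial character; since $\varphi$ is an isomorphism, this is equivalent to $rb=0$. Hence $\ann(b\chi)=\ann(b)$, and the same holds for any other generator of $W$. This matches the character set $\wh{U(R/\ann(W))}$ of Theorem~\ref{t:main2} with $\wh{U(R/\ann(b))}$ in Theorem~\ref{t:main3}, aligning item~(2).

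Finally, I would substitute the generator $b\chi$ of $W$ into the eigenvalue formulas of Theorem~\ref{t:main2}. By the definition of the $R$-action on $\wh R$, one has $(b\chi)(r)=\chi(br)$, so
\[
\wh P(b\chi)=\sum_{r\in R}P(r)(b\chi)(r)=\sum_{r\in R}P(r)\chi(br),
\]
while the expression for $\wh Q(\rho)$ transfers verbatim because it depends only on $\ann(W)=\ann(b)$. Plugging these into the two eigenvalue expressions in Theorem~\ref{t:main2} yields exactly the formulas stated in Theorem~\ref{t:main3}, each with multiplicity one.

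The one point that needs checking is that $\sum_{r\in R}P(r)\chi(br)$ depends only on the principal ideal $Rb$ and not on the chosen generator $b$. If $Rb=Rb'$, Proposition~\ref{p:cyclic.sub} gives $b'=ub$ for some $u\in U(R)$; substituting $s=ur$ and using that $P$ is constant on associates yields $\sum_r P(r)\chi(b'r)=\sum_s P(u\inv s)\chi(bs)=\sum_s P(s)\chi(bs)$. This well-definedness check is the only nontrivial point; beyond it, the theorem is a dictionary translation made possible by the Frobenius hypothesis.
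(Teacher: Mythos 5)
Your proof is correct and matches the paper's intent: the paper states Theorem~\ref{t:main3} as a reformulation of Theorem~\ref{t:main2} for $V=R$ and leaves the translation via the generating character implicit, which is exactly the dictionary you supply. The three steps (cyclic submodules of $\wh R$ correspond to principal ideals of $R$ under $\varphi(r)=r\chi$; $\ann(b\chi)=\ann(b)$ since $\varphi$ is an isomorphism; $(b\chi)(r)=\chi(br)$) are all correct, and your final well-definedness check is fine, though it is already guaranteed by the remark following Theorem~\ref{t:main2} that $\wh P(\chi)$ depends only on the cyclic submodule $R\chi$.
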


\begin{example}
Let $R$ be the ring $\mathbb{Z}/4\mathbb{Z}$, which we order as $(0,1,2,3)$, and let $V=R$. Since $1$ and $3$ are associates, we set $p_3 = p_1$.
The transition graphs of the coin-toss walk and the  affine walk are shown in Figure~\ref{fig:ex-Z4}. The eigenvalues of their respective transition matrices  are
\begin{align*}
&\alpha\left(p_0-p_2\right) +(1-\alpha)\left(q_1-q_3\right),
\alpha\left(p_0-p_2\right) +(1-\alpha)\left(q_1+q_3\right), \\
&\alpha\left(p_0-2 p_1+p_2\right) +(1-\alpha)\left(q_1+q_3\right)
\text{ and } 1,
\end{align*}
and
\[
\left(p_0-p_2\right) \left(q_1-q_3\right),\left(p_0-p_2\right) \left(q_1+q_3\right),\left(p_0-2 p_1+p_2\right) \left(q_1+q_3\right)
\text{ and } 1.
\]

\begin{figure}[htbp!]
\begin{center}
\begin{tabular}{c c}
\includegraphics[scale=0.28]{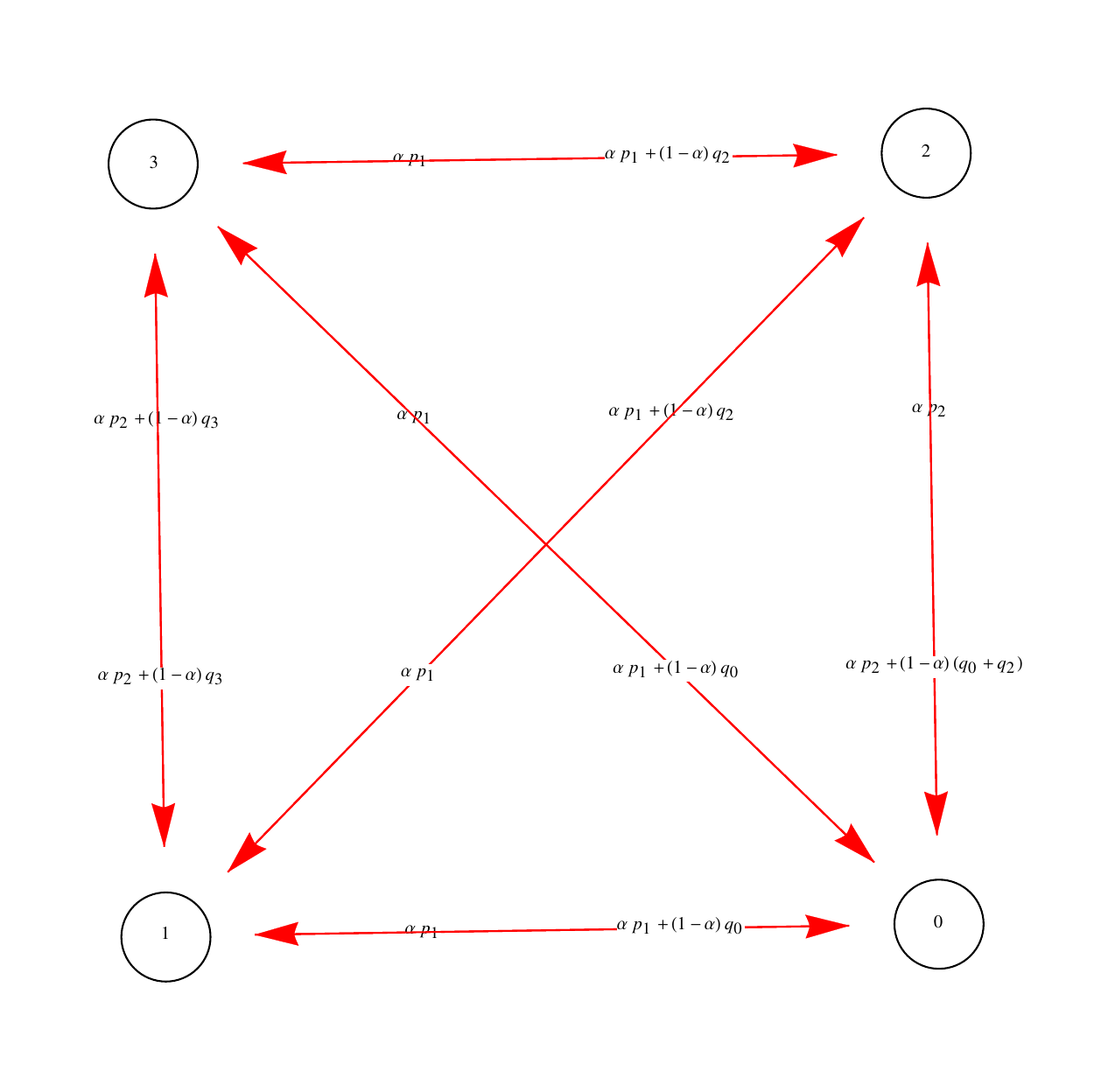}
&
\includegraphics[scale=0.28]{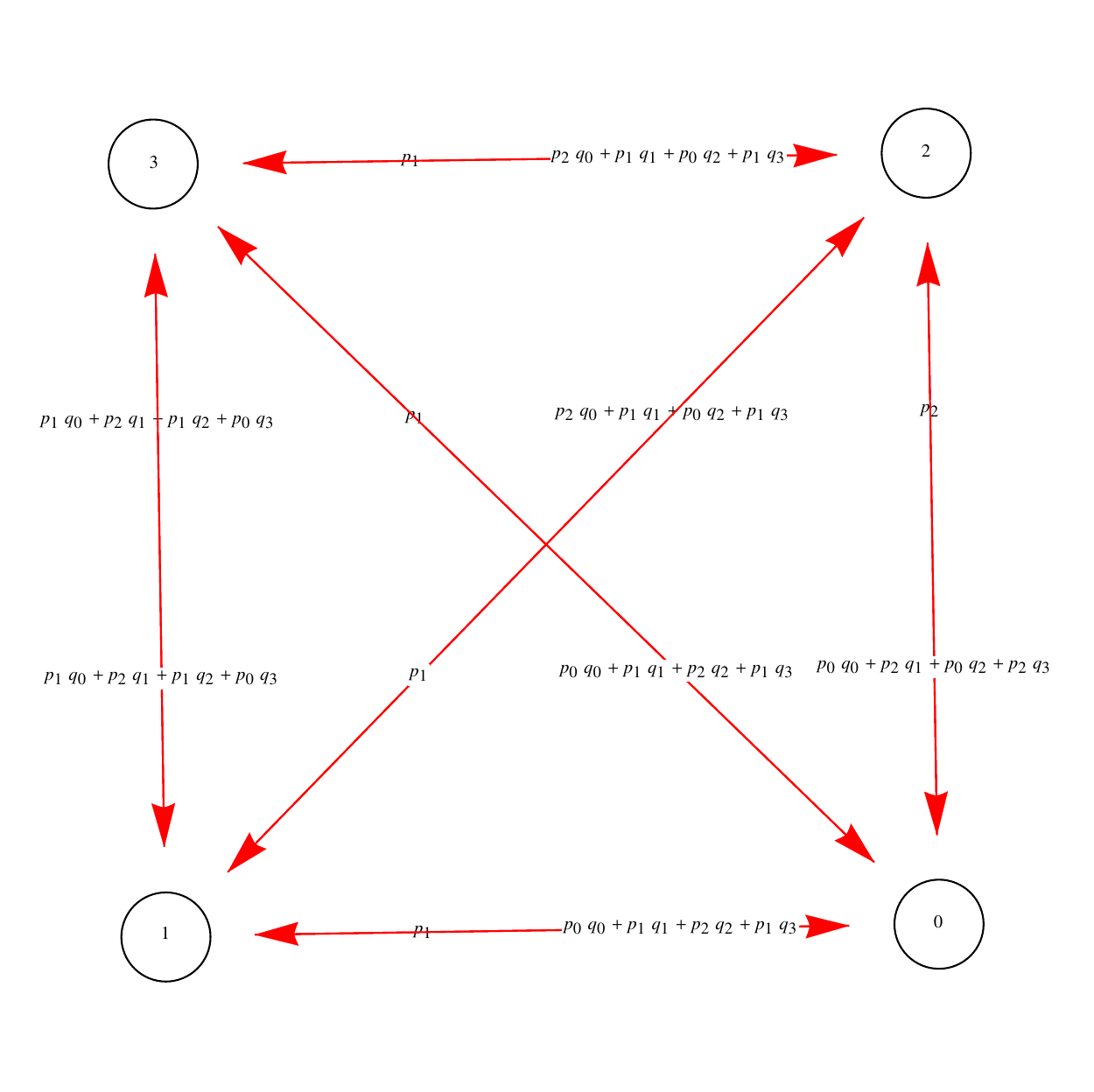}
\end{tabular}
\caption{The transition graphs of the coin-toss and affine walk on $\mathbb{Z}/4\mathbb{Z}$.}
\label{fig:ex-Z4}
\end{center}
\end{figure}
\end{example}

The plan of the rest of the paper is as follows. In Section~\ref{sec:prelims}, we review first the representation theory of groups followed by that of monoids, emphasizing the parts that are relevant to this study. In Section~\ref{sec:affine}, we study the structure of the affine monoid and use the results in the previous section to understand the representation theory of this monoid. Finally, we prove Theorem~\ref{t:main2} and examine various special cases of this general result in Section~\ref{sec:eigen}.

\section{Preliminaries on group and monoid representation theory}
\label{sec:prelims}
The book~\cite{repbook} serves as a basic reference for those aspects of the representation theory of monoids that we shall need.
If $M$ is a finite monoid, then $\mathbb CM$ denotes the \emph{monoid algebra} of $M$. It consists of all formal linear combinations of elements of $M$ with the obvious addition operation and with product \[\sum_{m\in M}c_mm\cdot \sum_{m\in M}d_mm=\sum_{m,n\in M}c_md_nmn.\]  Finite dimensional $\mathbb CM$-modules (which are the only kind we consider) correspond to finite dimensional matrix representations of $M$ over $\mathbb C$.  A probability $P$ on $M$ can be identified with the element $\sum_{m\in M}P(m)m\in \mathbb CM$ and the product of two probabilities in $\mathbb CM$ corresponds to their convolution.

If $V$ is a $\mathbb CM$-module, the \emph{character} of $V$ is the mapping $\chi_V\colon M\to \mathbb C$ given by sending $m\in M$ to the trace of the operator on $V$ given by $v\mapsto mv$.  It is not in general true that a module is determined by its character but semisimple modules are, cf.~\cite{RhodesZalc}.  The character of a simple module is called an \emph{irreducible character}.  The irreducible characters of a monoid form a linearly independent set of mappings~\cite[Theorem~7.7]{repbook}.

A \emph{composition series} for a $\mathbb CM$-module $V$ is a series of submodules
\begin{equation}\label{eq.comp.series}
V=V_0\supsetneq V_1\supsetneq \cdots \supsetneq V_n=0
\end{equation}
such that the \emph{composition factor $V_i/V_{i+1}$} is simple for $i=0,\ldots, n-1$.  The Jordan-H\"older theorem~\cite[Theorem~1.1.4]{benson}
guarantees that the length of any two composition series for $V$ is the same and, moreover, that if $S$ is a simple $\mathbb CM$-module, then the number $[V:S]$ of composition factors isomorphic to $S$ is the same for any two composition series.  The isomorphism class of a module $V$ shall be written $[V]$.

\subsection{Group representation theory}
The reader is referred to~\cite{serrerep} for the basics of group representation theory.  If $G$ is a finite group, then $\mathbb CG$ is called the \emph{group algebra} of $G$.  It is a semisimple algebra and hence every finite dimensional $\mathbb CG$-module $V$ is a direct sum of simple $\mathbb CG$-modules, which are in fact its composition factors (with multiplicity).  If $V$ is a $\mathbb CG$-module and $S$ is a simple $\mathbb CG$-module, then
\begin{equation}\label{mult.form}
 \dim \Hom_{\mathbb CG}(S,V)=\dim \Hom_{\mathbb CG}(V,S)=[V:S].
\end{equation}

The irreducible characters of a finite group $G$ form an orthonormal set for the inner product on $\mathbb C^G$ given by
\[\langle f,g\rangle = \frac{1}{|G|}\sum_{x\in G}f(x)\ov{g(x)}.\] This is called the \emph{first orthogonality relations}. If $\theta$ is the character of a $\mathbb CG$-module $V$ and $\chi$ is the character of a simple $\mathbb CG$-module $S$, then $\langle \theta,\chi\rangle=[V:S]$.

An important consequence of Schur's lemma is that if $a$ belongs to the center of $\mathbb CG$ and $V$ is a simple $\mathbb CG$-module, then $a$ acts on $V$ via multiplication by a scalar.

If $H\leq G$ is a subgroup and $V$ is a $\mathbb CH$-module, then $\Ind_H^G V=\mathbb CG\otimes_{\mathbb CH} V$ is a $\mathbb CG$-module called an \emph{induced module}.  Note that \[\dim \Ind_H^G V=[G:H]\cdot \dim V.\]  Also, $\Ind _H^G \mathbf 1_H$ is isomorphic to the permutation module $\mathbb C[G/H]$.  If $W$ is a $\mathbb CG$-module, then $\Res^G_H W$ is the $\mathbb CH$-module obtained by restricting scalars.

\begin{Thm}[Frobenius reciprocity]\label{t:frob.rec}
Let $H\leq G$ be a subgroup and let $V$ be a $\mathbb CH$-module and $W$ a $\mathbb CG$-module.  Then the isomorphism
\[\Hom_{\mathbb CG}(\Ind_H^G V,W)\cong \Hom_{\mathbb CH}(V,\Res^G_H W)\]
holds.
\end{Thm}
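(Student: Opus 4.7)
The plan is to establish the isomorphism by constructing explicit mutually inverse maps in both directions, exploiting the tensor-hom adjunction built into the definition $\Ind_H^G V = \mathbb CG\otimes_{\mathbb CH} V$, where $\mathbb CG$ is viewed as a $(\mathbb CG,\mathbb CH)$-bimodule via left and right multiplication.

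First I would define the forward map
\[
\Phi\colon \Hom_{\mathbb CG}(\Ind_H^G V,W)\to \Hom_{\mathbb CH}(V,\Res^G_H W)
\]
by $\Phi(\p)(v)=\p(1\otimes v)$. A direct check confirms $\Phi(\p)$ is $\mathbb CH$-linear: for $h\in H$,
\[
\Phi(\p)(hv)=\p(1\otimes hv)=\p(h\otimes v)=h\p(1\otimes v)=h\Phi(\p)(v),
\]
where the second equality is the balanced tensor relation over $\mathbb CH$ and the third uses $\mathbb CG$-linearity of $\p$.

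For the reverse map $\Psi$, given $\psi\in \Hom_{\mathbb CH}(V,\Res^G_H W)$, I would define $\Psi(\psi)(g\otimes v)=g\cdot \psi(v)$ on simple tensors and extend linearly. The only place the hypotheses do any real work is in checking well-definedness: the bilinear assignment $(g,v)\mapsto g\cdot \psi(v)$ must descend across the relation $gh\otimes v=g\otimes hv$ for $h\in H$, and this is exactly the content of $\mathbb CH$-linearity of $\psi$, since $(gh)\cdot \psi(v)=g\cdot (h\psi(v))=g\cdot \psi(hv)$. The $\mathbb CG$-linearity of $\Psi(\psi)$ is then automatic from associativity of left multiplication by $\mathbb CG$ on itself.

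To finish, I would verify that $\Phi$ and $\Psi$ are mutually inverse. In one direction, $\Psi(\Phi(\p))(g\otimes v)=g\cdot \p(1\otimes v)=\p(g\otimes v)$ by $\mathbb CG$-linearity of $\p$, and simple tensors span $\Ind_H^G V$ over $\mathbb C$; in the other direction, $\Phi(\Psi(\psi))(v)=1\cdot \psi(v)=\psi(v)$. Both $\Phi$ and $\Psi$ are manifestly $\mathbb C$-linear, so this produces the desired isomorphism. Since nothing beyond elementary bimodule bookkeeping is involved, I do not anticipate any genuine obstacle beyond the well-definedness check for $\Psi$.
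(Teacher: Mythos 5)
Your proof is correct: it is the standard tensor--hom adjunction argument, with the only substantive point (well-definedness of $\Psi(\psi)$ via the $\mathbb CH$-balanced relation $gh\otimes v=g\otimes hv$) handled properly. The paper states Frobenius reciprocity without proof, citing standard references such as \cite{serrerep}, so there is no in-paper argument to compare against; your write-up coincides with the usual textbook proof.
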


The Mackey decomposition theorem describes how an induced representation from one subgroup restricts to another.  Let $K\leq G$ be a subgroup and let $V$ be a $\mathbb CK$-module.  If $g\in G$, then $V^g$ is the $\mathbb C[gKg^{-1}]$-module with underlying vector space $V$ and action given by $xv=g^{-1}xgv$ for $x\in gKg^{-1}$.   With this notation, the Mackey decomposition theorem says the following.

\begin{Thm}[Mackey decomposition]\label{t:mackey.dec}
Let $G$ be a group and let $H,K$ be subgroups of $G$.  Let $T$ be a complete set of representatives of the double cosets $H\backslash G/K$.  If $V$ is a $\mathbb CK$-module, then the decomposition
\[\Res^G_H \Ind_K^G V\cong \bigoplus_{t\in T} \Ind_{H\cap tKt^{-1}}^H \Res ^{tKt^{-1}}_{H\cap tKt^{-1}}V^t\]
holds.
\end{Thm}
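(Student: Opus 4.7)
The plan is to decompose $\Ind_K^G V = \mathbb{C}G\otimes_{\mathbb C K} V$ as a $\mathbb C H$-module by first slicing up $\mathbb C G$ along the double coset partition $G = \bigsqcup_{t\in T} HtK$. Since each double coset $HtK$ is stable under left multiplication by $H$ and right multiplication by $K$, the subspace $\mathbb C[HtK]\subseteq \mathbb C G$ is a $(\mathbb C H,\mathbb C K)$-sub-bimodule, and tensor products over $\mathbb C K$ commute with direct sums, yielding
\[
\Res^G_H \Ind_K^G V \;\cong\; \bigoplus_{t\in T} \mathbb C[HtK]\otimes_{\mathbb C K} V
\]
as $\mathbb C H$-modules. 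It then remains to identify each summand with $\Ind_{L_t}^H \Res^{tKt^{-1}}_{L_t} V^t$, where $L_t = H\cap tKt^{-1}$.

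For a fixed $t$, the plan is to define $\varphi_t\colon \mathbb C H\otimes_{\mathbb C L_t} V^t \to \mathbb C[HtK]\otimes_{\mathbb C K} V$ by $\varphi_t(h\otimes v) = ht\otimes v$, and to show it is a $\mathbb C H$-module isomorphism. Checking $H$-linearity is immediate; the delicate point is well-definedness over the balanced tensor product. For $\ell\in L_t$, we have $t^{-1}\ell t\in K$, and the action on $V^t$ is given by $\ell\cdot v = (t^{-1}\ell t)v$. So one computes
\[
\varphi_t(h\ell\otimes v) = h\ell t\otimes v = ht\cdot (t^{-1}\ell t)\otimes v = ht\otimes (t^{-1}\ell t)v = \varphi_t(h\otimes \ell\cdot v),
\]
where the middle equality uses that $t^{-1}\ell t\in K$ can be moved across the $\mathbb C K$-balanced tensor.

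To prove bijectivity, I would verify that $HtK = \bigsqcup_{hL_t\in H/L_t} ht K$: the equation $h_1 tk_1 = h_2 tk_2$ forces $h_2^{-1}h_1 = tk_2k_1^{-1}t^{-1}\in H\cap tKt^{-1}=L_t$, so distinct cosets $hL_t$ yield disjoint left cosets $htK$ of $K$ inside $HtK$, and conversely every element of $HtK$ lies in such a coset. Consequently $\mathbb C[HtK]$ is a free right $\mathbb C K$-module with basis $\{ht : hL_t\in H/L_t\}$, so $\mathbb C[HtK]\otimes_{\mathbb C K} V = \bigoplus_{hL_t\in H/L_t} ht\otimes V$. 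Similarly $\mathbb C H\otimes_{\mathbb C L_t} V^t = \bigoplus_{hL_t\in H/L_t} h\otimes V$, and $\varphi_t$ matches these decompositions summand-by-summand, hence is a linear isomorphism.

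The main obstacle I anticipate is keeping the twist straight: the module $V^t$ is defined so that $tKt^{-1}$ acts via conjugation back into $K$, and one must ensure that the formula $\varphi_t(h\otimes v) = ht\otimes v$ respects exactly this twist and not its inverse. Once the well-definedness computation above is verified, the rest is bookkeeping via the double coset decomposition, and the Mackey formula drops out by assembling the pieces.
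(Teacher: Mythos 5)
The paper states Mackey decomposition only as a standard preliminary (Theorem~\ref{t:mackey.dec}) and does not prove it, so there is no in-paper argument to compare against. Your proof is correct and is the classical bimodule argument: slicing $\mathbb CG$ as a $(\mathbb CH,\mathbb CK)$-bimodule along the double coset partition, then identifying each summand $\mathbb C[HtK]\otimes_{\mathbb CK}V$ with $\Ind_{L_t}^H\Res^{tKt^{-1}}_{L_t}V^t$; in particular the balanced-tensor well-definedness check and the $H/L_t$-indexed free-module decomposition of $\mathbb C[HtK]$ as a right $\mathbb CK$-module are both carried out correctly.
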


If $\p\colon G\to K$ is a group homomorphism and $V$ is a $\mathbb CK$-module, then it is also a $\mathbb CG$-module, called the \emph{inflation} of $V$ along $\p$, via $gv=\p(g)v$ for $g\in G$ and $v\in V$.

\begin{Prop}\label{p:inflate.ind}
Let $\p\colon G\to K$ be a surjective homomorphism, $H\leq K$ and $V$ a $\mathbb CH$-module. Putting $H'=\pinv(H)$, the inflation of $\Ind_H^K V$ along $\p$ is isomorphic to $\Ind_{H'}^G V$ (where $V$ is a $\mathbb CH'$-module via inflation).
\end{Prop}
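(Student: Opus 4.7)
My plan is to build an explicit $\mathbb CG$-module isomorphism
\[\Phi\colon \Ind_{H'}^G V \longrightarrow \Ind_H^K V,\qquad g\otimes v\mapsto \p(g)\otimes v,\]
where the codomain carries the inflated $\mathbb CG$-action $g\cdot(k\otimes v)=\p(g)k\otimes v$. First I would observe that $\p$ restricts to a surjective homomorphism $\p|_{H'}\colon H'\to H$ (given $h\in H$, choose any $g\in G$ with $\p(g)=h$; then $g\in H'=\pinv(H)$ automatically), and note that the $\mathbb CH'$-structure on $V$ appearing in the statement is nothing but $h'\cdot v=\p(h')v$.

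The first step is to check that $\Phi$ is well-defined on the balanced tensor product $\mathbb CG\otimes_{\mathbb CH'}V$: the image of $gh'\otimes v$ is $\p(g)\p(h')\otimes v$, which equals the image $\p(g)\otimes \p(h')v=\p(g)\otimes h'v$ of $g\otimes h'v$, because $\p(h')\in H$ lets the factor pass across the tensor over $\mathbb CH$. $\mathbb CG$-linearity is then immediate: $\Phi(g'(g\otimes v))=\p(g'g)\otimes v=\p(g')(\p(g)\otimes v)=g'\cdot \Phi(g\otimes v)$.

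For bijectivity, I would argue surjectivity directly and injectivity via a dimension count. Any basis tensor $k\otimes v$ in the codomain equals $\Phi(g\otimes v)$ for any $g\in\pinv(k)$, and surjectivity of $\p$ guarantees such a $g$ exists. For dimensions, $\dim\Ind_{H'}^G V=[G:H']\cdot\dim V$ and $\dim\Ind_H^K V=[K:H]\cdot\dim V$; these agree because the rule $gH'\mapsto\p(g)H$ defines a bijection $G/H'\to K/H$ (well-defined and injective because $H'=\pinv(H)$, surjective because $\p$ is). A surjection between finite-dimensional spaces of equal dimension is an isomorphism, and the proof is complete.

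I do not anticipate any genuine obstacle: every step is a routine verification once the conventions are laid out. The one subtlety worth flagging is the interplay between the two uses of inflation—of the module $V$ from $H$ up to $H'$, and of the induced module from $K$ up to $G$—and these are reconciled by the single identity $h'\cdot v=\p(h')v$ built into the hypothesis, which is precisely what keeps the definition of $\Phi$ consistent across both tensor relations.
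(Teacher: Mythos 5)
Your argument is correct and is essentially the paper's proof: both define the map $g\otimes v\mapsto \p(g)\otimes v$, verify it is a well-defined surjective $\mathbb CG$-module homomorphism, and conclude it is an isomorphism by comparing dimensions via $[G:H']=[K:H]$. You simply spell out the routine verifications (well-definedness, linearity, the bijection $G/H'\to K/H$) that the paper leaves to the reader.
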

\begin{proof}
It is easily verified that there is a surjective homomorphism of $\mathbb CG$-modules $\psi\colon \mathbb CG\otimes_{\mathbb CH'} V\to \mathbb CK\otimes_{\mathbb CH} V$ given by $g\otimes v\mapsto \p(g)\otimes v$ on basic tensors.
Since $\dim \Ind_{H'}^G V= [G:H']\cdot \dim V=[K:H]\cdot \dim V=\dim \Ind_H^K V$, we conclude that $\psi$ is an isomorphism.
\end{proof}

\subsection{Monoid representation theory}

Just as for rings, the set of invertible elements (known as units) in a monoid forms a group.
The \emph{group of units} (invertible elements) of a monoid $M$ will be denoted $U(M)$ throughout.
An \emph{idempotent} of a monoid $M$ is an element $e\in M$ such that $e^2=e$.  Denote by $E(M)$ the set of idempotents of $M$.  If $M$ is a monoid and $e\in E(M)$ is an idempotent, then $eMe$ is a monoid with identity $e$.
The group of units $U(eMe)$ of $eMe$ is called the \emph{maximal subgroup} of $M$ at $e$.

 We recall the definition of Green's relations~\cite{Green} on a monoid $M$.  We write $m\JJ n$ if $MmM=MnM$, $m\RR n$ if $mM=nM$ and $m\LL n$ if $Mm=Mn$.  We write $m\leq_{\JJ} n$ if $MmM\subseteq MnM$.  A $\JJ$-class is called \emph{regular} if it contains an idempotent. The regular $\mathscr J$-classes of  $M$ form a poset via the ordering $J\leq J'$ if $MJM\subseteq MJ'M$.  If $e$ and $f$ are $\JJ$-equivalent idempotents, then $eMe\cong fMf$ and hence $U(eMe)\cong U(fMf)$.  See~\cite[Corollary~1.2]{repbook}.

Let us first recall some basic facts about the representation theory of monoids.  If $e\in E(M)$ and $V$ is a $\mathbb CM$-module, then $eV$ is a $\mathbb CU(eMe)$-module.  We say that $e$ is an \emph{apex} for the simple $\mathbb CM$-module $S$ if $eS\neq 0$ and $mS=0$ for all $m\in eMe\setminus U(eMe)$.  This is equivalent to $mS=0$ for all $m$ such that $e\notin MmM$.  The fundamental theorem of Clifford-Munn-Ponizovskii theory says the following.  See~\cite{gmsrep} or ~\cite[Theorem~5.5]{repbook} for details.

\begin{Thm}\label{t:CMP}
Let $M$ be a finite monoid and  $e_1,\ldots, e_s$ form a complete set of idempotent representatives of the regular $\mathscr J$-classes of $M$.
Then the isomorphism classes of simple $\mathbb CM$-modules are parameterized by pairs $(e_i,[V])$ with $V$ a simple $\mathbb CU(e_iMe_i)$-module.  The corresponding simple module $V^\sharp$ is characterized up to isomorphism by the properties that $e_i$ is an apex for $V^\sharp$ and $e_iV^\sharp\cong V$ as a $\mathbb CU(e_iMe_i)$-module.
\end{Thm}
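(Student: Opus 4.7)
The plan is to establish a bijection between pairs $(e_i,[V])$ and isomorphism classes of simple $\mathbb{C}M$-modules by constructing $V^\sharp$ from $(e_i,V)$ in one direction and by showing every simple $\mathbb{C}M$-module has a unique apex in the other. For the construction, fix a regular idempotent $e=e_i$ and a simple $\mathbb{C}U(eMe)$-module $V$. The key structural observation is that $I:=eMe\setminus U(eMe)$ is a two-sided ideal of the finite monoid $eMe$, so I can inflate $V$ to a $\mathbb{C}(eMe)$-module $\widetilde V$ by letting $I$ act as zero. Since $\mathbb{C}Me$ is a $(\mathbb{C}M,\mathbb{C}(eMe))$-bimodule, I form the induced module $W:=\mathbb{C}Me\otimes_{\mathbb{C}(eMe)}\widetilde V$ and define $V^\sharp$ to be the quotient $W/N$, where $N$ is the sum of all submodules $T\subseteq W$ with $eT=0$.

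The verification of the asserted properties rests on the identity $e\mathbb{C}Me=\mathbb{C}(eMe)$, from which one gets $eW\cong\widetilde V$ as $\mathbb{C}(eMe)$-modules, and hence $eV^\sharp\cong V$ as $\mathbb{C}U(eMe)$-modules (since $eN=0$ by construction). Simplicity of $V^\sharp$ follows because any nonzero submodule $T\subseteq V^\sharp$ must satisfy $eT\neq 0$ (otherwise its preimage in $W$ lies in $N$), and then $eT$ is a nonzero $\mathbb{C}U(eMe)$-submodule of $V$, hence all of $V$; since $V^\sharp$ is generated over $\mathbb{C}M$ by $eV^\sharp$, this forces $T=V^\sharp$. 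The apex condition $mV^\sharp=0$ for $m\in I$ is inherited from the inflation step.

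For the converse, given a simple $\mathbb{C}M$-module $S$, set $\mathrm{Ann}(S)=\{m\in M:mS=0\}$ and pick a $\mathscr J$-class $J$ minimal among those not contained in $\mathrm{Ann}(S)$. Passing to the Rees quotient of $M$ by the ideal generated by $\mathrm{Ann}(S)$ together with all $\mathscr J$-classes strictly below $J$ turns $J$ into the minimum nonzero $\mathscr J$-class of a finite monoid on which $S$ is still a simple module; a standard dichotomy ensures such a minimum $\mathscr J$-class is regular, so I pick an idempotent $e\in J$. Minimality forces $eMe\setminus U(eMe)\subseteq\mathrm{Ann}(S)$, making $e$ an apex for $S$, and then $eS$ is a simple $\mathbb{C}U(eMe)$-module because any proper $\mathbb{C}U(eMe)$-submodule would generate a proper $\mathbb{C}M$-submodule of $S$. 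To conclude $S\cong(eS)^\sharp$, I apply the induce-Schur adjunction to produce a nonzero $\mathbb{C}M$-morphism $W\to S$ (with $W$ built from $V:=eS$) extending the identity on $eS$, and compare $e$-parts to identify its kernel with $N$. Distinct pairs yield non-isomorphic simples because the apex is determined by $S$ and $V$ is recovered as $eS$.

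The main technical obstacle is the interplay between the Schur functor $X\mapsto eX$ and induction: the isomorphism $eW\cong\widetilde V$ and the simplicity of $V^\sharp$ both hinge on the identity $e\mathbb{C}Me=\mathbb{C}(eMe)$ and on recognising $N$ as precisely the submodule of $W$ invisible to the Schur functor. A secondary delicate point is the regularity of the chosen $\mathscr J$-class, which relies on the structure theory of finite semigroups — specifically, the fact that a $0$-minimal $\mathscr J$-class of a finite monoid is either null or regular, with the null case excluded because $S$ is a nonzero simple module.
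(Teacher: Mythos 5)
The paper does not prove this theorem; it is cited as the fundamental theorem of Clifford--Munn--Ponizovskii theory, with references to Ganyushkin--Mazorchuk--Steinberg and to \cite[Theorem~5.5]{repbook}. Your sketch reproduces exactly the standard argument from those sources: construct $V^\sharp$ as the quotient of $W=\mathbb{C}Me\otimes_{\mathbb{C}(eMe)}\widetilde V$ by the largest submodule $N$ with $eN=0$, use $e\mathbb{C}Me=\mathbb{C}(eMe)$ to identify $eW\cong\widetilde V$ and hence $eV^\sharp\cong V$, and for the converse locate a regular $\mathscr J$-class minimal among those not annihilating $S$ and use the unit--counit of the adjunction $\Hom_{\mathbb CM}(\mathbb{C}Me\otimes_{\mathbb{C}(eMe)}-,\,-)\cong\Hom_{\mathbb{C}(eMe)}(-,e\,-)$ to identify $S$ with $(eS)^\sharp$. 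This is correct in outline.

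A few steps are stated too loosely and should be tightened. (1) In the simplicity argument, from a nonzero submodule $T'\subseteq V^\sharp$ with $eT'=0$ you get, for the preimage $T\subseteq W$, only that $eT\subseteq N$, not that $eT=0$; you need the extra observation $eT=e(eT)\subseteq eN=0$ before invoking maximality of $N$. (2) You choose a $\mathscr J$-class $J$ minimal among those not inside $\mathrm{Ann}(S)$, but you never show this minimal class is \emph{unique}; without uniqueness the parameterization is not well defined. Uniqueness follows from cyclicity of $S$: if $J_1\neq J_2$ were both minimal and $m_1\in J_1$ with $m_1s\neq 0$, then $S=\mathbb{C}Mm_1s$ while every element of $J_2Mm_1$ lies in a $\mathscr J$-class strictly below one of $J_1,J_2$, hence in $\mathrm{Ann}(S)$, forcing $J_2S=0$, a contradiction. (3) The Rees-quotient step as phrased is redundant: by minimality, every $\mathscr J$-class strictly below $J$ already lies in $\mathrm{Ann}(S)$, and the dichotomy (null or regular) applies directly to the $0$-minimal $\mathscr J$-class $J$ of $M/\mathrm{Ann}(S)$; mentioning ``together with all $\mathscr J$-classes strictly below $J$'' adds nothing and obscures that you need apex uniqueness before declaring $J$ the \emph{minimum} nonzero $\mathscr J$-class. (4) In showing $eS$ is simple over $\mathbb{C}U(eMe)$, a nonzero $\mathbb{C}U(eMe)$-submodule $V'\subsetneq eS$ generates all of $S$ over $\mathbb{C}M$ by simplicity; to derive a contradiction you should first upgrade $V'$ to a $\mathbb{C}(eMe)$-submodule using the apex property ($I=eMe\setminus U(eMe)$ kills $S$), so that $e\mathbb{C}MV'=\mathbb{C}(eMe)V'=V'$, giving $eS=V'$, contradiction.
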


McAlister~\cite{McAlisterCharacter} gave a general method to compute the composition factors of a module from its character by inverting the character table of the monoid.  In practice, it can be quite unwieldy to implement this method.  A simpler method was given by the second author in~\cite{mobius2} in the case of a monoid whose idempotents form a submonoid. The reader is referred to~\cite{Stanley} for the M\"obius function of a poset.

\begin{Thm}\label{t:mult}
Let $M$ be a finite monoid whose idempotents form a submonoid. Fix an idempotent $e_J$ from each regular $\mathscr J$-class $J$.  Let $\chi$ be an irreducible character of $U(e_JMe_J)$ corresponding to the simple module $S$ and let $V$ be a finite dimensional $\mathbb CM$-module with character $\theta$.  Then
\[[V:S^\sharp]=\frac{1}{|U(e_JMe_J)|}\sum_{g\in U(e_JMe_J)} \chi(g)\sum_{J'\leq J}\ov{\theta(e_{J'}ge_{J'})}\mu(J',J)\] where $J'$ runs over regular $\mathscr J$-classes and $\mu$ is the M\"obius function of the poset of regular $\mathscr J$-classes of $M$.
\end{Thm}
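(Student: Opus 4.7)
The plan is to extract $[V:S^\sharp]$ by first projecting onto the maximal subgroup $U(e_JMe_J)$ via the idempotent $e_J$, and then using Möbius inversion over the poset of regular $\mathscr J$-classes to remove the contributions from composition factors whose apex lies strictly below $J$. The starting point is the vector-space decomposition $V=e_JV\oplus (1-e_J)V$, in which every element of the local monoid $e_JMe_J$ annihilates $(1-e_J)V$. Consequently, for $g\in U(e_JMe_J)$, the value $\theta(e_Jge_J)=\theta(g)$ is exactly the character of the $\mathbb CU(e_JMe_J)$-module $e_JV$ at $g$, and first orthogonality applied inside $U(e_JMe_J)$ yields
\[[e_JV:S]=\frac{1}{|U(e_JMe_J)|}\sum_{g\in U(e_JMe_J)}\chi(g)\,\ov{\theta(e_Jge_J)}.\]

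By Theorem~\ref{t:CMP}, a simple $\mathbb CM$-module $T^\sharp$ with apex $J''$ satisfies $e_JT^\sharp\neq 0$ precisely when $J''\leq J$, and $[e_JT^\sharp:S]=\delta_{[T],[S]}$ when $J''=J$. Thus the displayed expression equals $[V:S^\sharp]$ plus unwanted contributions from composition factors whose apex lies strictly below $J$. To cancel these, I would repeat the computation with $e_{J'}$ in place of $e_J$ for each $J'\leq J$. The hypothesis that $E(M)$ is a submonoid is essential here: it guarantees that $e_{J'}ge_{J'}$ is an honest element of $M$ whose action on $V$ has a well-defined trace, and that trace picks up only those composition factors of apex $\leq J'$. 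Combining these identities with the weights $\mu(J',J)$ should then leave exactly the composition factors of apex $J$, by the defining Möbius identity $\sum_{J''\leq J'\leq J}\mu(J',J)=\delta_{J'',J}$ on the poset interval $[J'',J]$.

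The hard part will be the compatibility statement underlying this Möbius inversion: one must show that the trace of $e_{J'}ge_{J'}$ on a simple module $T^\sharp$ of apex $J''\leq J'$ depends on $J'$ in a uniform enough way for the Möbius identity to apply cleanly, so that the inner sum $\sum_{J''\leq J'\leq J}\mu(J',J)\,\ov{\theta_{T^\sharp}(e_{J'}ge_{J'})}$ really does vanish whenever $J''<J$ and reduces to the group character $\ov{\chi_T(g)}$ when $J''=J$. This is exactly where the band structure on $E(M)$ does the real work, since it forces $e_{J'}e_J$ and $e_Je_{J'}$ to be idempotents in controllable $\mathscr J$-classes and lets one rewrite $e_{J'}ge_{J'}$, modulo the annihilator of $T^\sharp$, as an element of the maximal subgroup at the apex. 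Once this compatibility is established, the double sum over $J'\leq J$ and $g\in U(e_JMe_J)$ with weights $\mu(J',J)$ and $\chi(g)/|U(e_JMe_J)|$ collapses exactly to the formula claimed in the theorem.
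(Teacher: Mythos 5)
Your overall strategy---first orthogonality inside the maximal subgroup $U(e_JMe_J)$ combined with M\"obius inversion over the poset of regular $\mathscr J$-classes---is the right one, and your preliminary reductions are correct. Indeed $\theta(e_Jge_J)=\theta(g)$ is the character of the $\mathbb CU(e_JMe_J)$-module $e_JV$ because each $g\in U(e_JMe_J)$ kills $(1-e_J)V$ and stabilizes $e_JV$; first orthogonality then computes $[e_JV:S]$; and by the apex condition in Theorem~\ref{t:CMP} the only composition factors $T^\sharp$ of $V$ contributing to $[e_JV:S]$ have apex $J''\leq J$, with $[e_JT^\sharp:S]=\delta_{[T],[S]}$ when $J''=J$.

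The argument is nonetheless incomplete at exactly the point you flag. For the M\"obius inversion to close, you must prove the following compatibility lemma: for a composition factor $T^\sharp$ of apex $J''\leq J$ and $g\in U(e_JMe_J)$, the quantity $\theta_{T^\sharp}(e_{J'}ge_{J'})$ vanishes when $J''\not\leq J'$ and is \emph{independent of $J'$} on the interval $J''\leq J'\leq J$, so that
\[
\sum_{J'\leq J}\ov{\theta_{T^\sharp}(e_{J'}ge_{J'})}\,\mu(J',J)\;=\;\ov{\chi_T(g)}\,\sum_{J''\leq J'\leq J}\mu(J',J)\;=\;\ov{\chi_T(g)}\,\delta_{J'',J}.
\]
The vanishing half is easy (since $e_{J'}ge_{J'}\in e_{J'}Me_{J'}$, any simple with apex $J''\not\leq J'$ is annihilated by it). The constancy half is the heart of the theorem, and your sentence ``the band structure\dots lets one rewrite $e_{J'}ge_{J'}$, modulo the annihilator of $T^\sharp$, as an element of the maximal subgroup at the apex'' is precisely the assertion that needs a proof, not a substitute for one: the $e_{J'}$ are arbitrary idempotent representatives that need not commute, and one must actually show that $e_{J''}(e_{J'}ge_{J'})e_{J''}$ and $e_{J''}ge_{J''}$ have the same trace on $T=e_{J''}T^\sharp$, which is where the hypothesis that $E(M)$ is a (left regular) band is really consumed. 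A smaller point: ``repeat the computation with $e_{J'}$ in place of $e_J$'' misdescribes the formula, which keeps $g$ ranging over $U(e_JMe_J)$ and $\chi$ fixed, only sandwiching by $e_{J'}$ inside $\theta$; replacing $e_J$ throughout would change both the group and the character. Note finally that the paper states Theorem~\ref{t:mult} by citation to \cite{mobius2} and gives no proof, so there is no in-paper argument to compare against.
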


\subsection{Monoid random walks}\label{ss.monoid.random}
Let $M$ be a finite monoid acting on the left of a finite set $\Omega$ and let $P$ be a probability on $M$.  Then the \emph{random walk} of $M$ on $\Omega$ driven by $P$ is the Markov chain with state space $\Omega$ and with transitions $x\mapsto mx$ with probability $P(m)$.

The vector space $\mathbb C\Omega$ is then a left $\mathbb CM$-module by extending the action of $M$ on the basis $\Omega$ linearly.   If we identify $P$ with the element  \[\sum_{m\in M}P(m)m\in \mathbb CM,\] then the transition matrix of the random walk is the transpose of the matrix of the operator $P$ acting on the vector space $\mathbb C\Omega$ with respect to the basis $\Omega$.  See~\cite[Chapter~14]{repbook} or~\cite{Brown1,Brown2,ayyer_schilling_steinberg_thiery.2013} for details.  Note that the tranpose arises here because we are using row stochastic matrices for the transition matrix but left actions for the random walk.

\section{The affine monoid}
\label{sec:affine}
As before, fix a finite commutative ring $R$ and a finite $R$-module $V$.
We continue to use $\Aff(V)$ to denote the affine monoid of $V$.  Observe that \[U(\Aff(V)) =\{ax+b\mid a\in U(R), b\in V\}\] and, in fact, $U(\Aff(V)) = V\rtimes U(R)$ is a semidirect product of abelian groups.

\subsection{The algebraic structure of the affine monoid}
If $e\in E(R)$, then $1-e\in E(R)$ and the internal direct sum \[R=Re\oplus R(1-e)\]  is a direct product decomposition as rings (note that $Rf$ is a unital ring with identity $f$ for any $f\in E(R)$).  All direct sum decompositions $R=R_1\oplus R_2$ into a direct product of unital rings arise in this way (take $e$ to be the identity of $R_1$). There is a corresponding direct sum decomposition $V=eV\oplus (1-e)V$ and note that $eV$ is annihilated by $R(1-e)$ and $(1-e)V$ is annihilated by $Re$.  Also, $eV$ is an $Re$-module.
Denote by \[\varphi_e\colon R=Re\oplus R(1-e)\to Re\] the projection.  It is a surjective homomorphism of unital rings given by $\varphi_e(r)=re$. Notice that $re=0$ if and only if $r(1-e)=r$ and so $Re\cong R/R(1-e)$.  We also have a surjective homomorphism of $R$-modules $\pi_e\colon V\to eV$ given by $\pi_e(v)=ev$, which has kernel $(1-e)V$.

\begin{Prop}\label{p:unit.onto}
Let $e\in E(R)$ and let $\p_e\colon R\to Re$ be the projection $\varphi_e(r)=re$.
\begin{enumerate}
\item For $a\in R$, $\p_e(a)\in U(Re)$ if and only if $Ra\supseteq Re$.
\item The restriction $\p_e\colon U(R)\to U(Re)$ is surjective.
\end{enumerate}
\end{Prop}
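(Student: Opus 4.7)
The plan is to reduce both parts to short direct computations in $R$, using only the ring decomposition $R=Re\oplus R(1-e)$, commutativity, and the identities $e^2=e$ and $ue=u$ for $u\in Re$. Because $\p_e$ is a unital ring homomorphism onto $Re$, nothing subtle should be needed; I do not foresee any step being a real obstacle.

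For part (1), I would begin by unfolding the condition $\p_e(a)\in U(Re)$: since the identity of $Re$ is $e$, this means there exists $b\in R$ with $(ae)(be)=e$. Using commutativity and $e^2=e$, the left side simplifies to $abe$, so the condition becomes $abe=e$ for some $b\in R$. From this the equivalence is immediate: if $abe=e$ then $e=a(be)\in aR=Ra$, so $Re\subseteq Ra$; conversely, given $e=ca$ with $c\in R$, the element $b=c$ satisfies $abe=cae=e^2=e$. Hence $\p_e(a)\in U(Re)$ iff $Re\subseteq Ra$.

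For part (2), given $u\in U(Re)$ with inverse $v\in Re$, the natural move is to lift $u$ to $R$ by attaching the complementary idempotent: set $a=u+(1-e)$ and $b=v+(1-e)$. Then $\p_e(a)=ae=u$ immediately, since $ue=u$ and $(1-e)e=0$. To verify $a\in U(R)$, I would expand $ab$; the cross terms $u(1-e)$ and $(1-e)v$ vanish because $u,v\in Re$ give $u(1-e)=u-ue=0$ and similarly for $v$, so
\[
ab = uv + (1-e)^2 = e + (1-e) = 1.
\]
Thus $a\in U(R)$ and $\p_e(a)=u$, which establishes surjectivity of the restriction. Part (1) could also be invoked here as a sanity check, since the decomposition $1=e+(1-e)$ shows $Ra\supseteq Re$.
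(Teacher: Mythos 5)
Your proof is correct and takes essentially the same route as the paper: part (1) is the same unfolding of the unit condition in $Re$ using commutativity and $e^2=e$, and part (2) lifts $u\in U(Re)$ to $u+(1-e)\in U(R)$ exactly as the paper does.
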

\begin{proof}
To prove the first item, let $Ra\supseteq Re$ and write $e=ya$.  Then $e=yeae=ye\varphi_e(a)$ and so $\varphi_e(a)\in U(Re)$.  Conversely, if $ae\in U(Re)$, then $e=uae$ with $u\in U(Re)$ and hence $e=uea\in Ra$.  Thus $Ra\supseteq Re$.

For the second item, let $u\in U(Re)$ with inverse $v\in U(Re)$, and so $uv=e$.  Let $u'=u+(1-e)$ and $v'=v+(1-e)$.  Then since $ue=u$, $ve=v$ and $e(1-e)=0$, we see that $u'v' = uv+u(1-e)+v(1-e)+(1-e) = uv+(1-e)=e+1-e=1$ and so $u'\in U(R)$.  Moreover, $\varphi_e(u')=u'e=u$.  This completes the proof.
\end{proof}

We now begin to study the affine monoid.  Let $\pi\colon \Aff(V)\to M(R)$ be the surjective homomorphism $\pi(ax+b)=a$.   A monoid is called a \emph{left regular band} if it satisfies the identity $xyx=xy$.  Left regular bands have played an important role in applications of monoids to Markov chain theory. See~\cite{BHR,DiaconisBrown1,Brown1,Brown2,DiaconisICM}.

\begin{Prop}\label{p:Aff.mon.idem}
Let $R$ be a commutative ring.
\begin{enumerate}
  \item $E(\Aff(V)) = \{ex+b\mid e\in E(R), eb=0\}$.
  \item $E(\Aff(V))$ is a submonoid of $\Aff(V)$ and a left regular band.
  \item If $ex+b,fx+c\in E(\Aff(V))$, then $ex+b\leq_{\JJ} fx+c$ if and only if $Re\subseteq Rf$. In particular, $ex+b\JJ fx+c$ if and only if $e=f$.
  \item If $e\in E(R)$, then $E(Re) = \{f\in E(R)\mid Rf\subseteq Re\}$.
\end{enumerate}
\end{Prop}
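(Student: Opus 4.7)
The plan is to prove each item by direct computation in $\Aff(V)$ using the product rule $(ax+b)(cx+d) = acx + ad + b$ recorded in the introduction.

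For item (1), squaring yields $(ax+b)^2 = a^2 x + ab + b$, so idempotency is equivalent to $a \in E(R)$ together with $ab = 0$. For item (2), given idempotents $ex+b$ and $fx+c$, their product is $efx + (ec + b)$; since $R$ is commutative, $ef \in E(R)$, and since $fc = 0$ and $eb = 0$ one has $ef(ec + b) = efc + efb = 0$, so the product again satisfies (1) and $E(\Aff(V))$ is a submonoid. The left regular band identity is then verified by expanding
\[
(ex+b)(fx+c)(ex+b) = (efx + ec + b)(ex + b) = efx + efb + ec + b,
\]
which equals $(ex+b)(fx+c)$ since $efb = 0$.

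The main content is item (3). I will compute the principal two-sided ideal generated by $fx+c$ as
\[
(\alpha x + u)(fx + c)(\beta x + v) = \alpha\beta f\, x + (\alpha f v + \alpha c + u),
\]
with $\alpha, \beta \in R$ and $u, v \in V$. Taking $\alpha = \beta = 1$ and letting $u$ vary freely over $V$ shows that the constant term is unconstrained, so
\[
\Aff(V)(fx+c)\Aff(V) = \{s x + w : s \in Rf,\ w \in V\}.
\]
Hence $ex+b \leq_{\JJ} fx+c$ is equivalent to $e \in Rf$, that is, $Re \subseteq Rf$. For the sharpened $\JJ$-equivalence, $Re = Rf$ with $e, f \in E(R)$ gives $e = rf$ and $f = se$, whence $ef = rf^2 = rf = e$ and likewise $ef = se^2 = se = f$, forcing $e = f$. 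Finally, item (4) is immediate from the fact that $Re$ is a unital subring with identity $e$: for $f \in R$, membership $f \in Re$ is equivalent to $f = ef$ and hence to $Rf \subseteq Re$, while idempotency of $f$ inside $Re$ coincides with idempotency inside $R$.

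No step is genuinely hard; the only mild subtlety is recognizing in (3) that the constant term of an element of the principal two-sided ideal of $fx+c$ is freely controlled by the leftmost factor, so that the $\JJ$-order on $E(\Aff(V))$ is governed entirely by the $x$-coefficient and thus reduces to the principal-ideal order on $E(R)$.
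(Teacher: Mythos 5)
Your items (1), (2), and (4) coincide with the paper's proof almost verbatim, and they are correct. The interesting difference is in item (3). The paper handles the forward direction by pushing through the projection homomorphism $\pi\colon \Aff(V)\to M(R)$, $\pi(ax+b)=a$: since homomorphisms preserve $\leq_{\JJ}$ and $M(R)$ is commutative, $ex+b\leq_{\JJ}fx+c$ gives $e\leq_{\JJ}f$ in $M(R)$, i.e.\ $Re\subseteq Rf$; the converse is a one-line calculation $(ex+b)(fx+c)=ex+b$ using $ef=e$ and $fc=0$. You instead compute the principal two-sided ideal directly:
\[
(\alpha x+u)(fx+c)(\beta x+v)=\alpha\beta f\,x+(\alpha f v+\alpha c+u),
\]
and read off $\Aff(V)(fx+c)\Aff(V)=\{sx+w : s\in Rf,\ w\in V\}$, so both directions drop out at once. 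That is a valid and in fact slightly more informative argument, since it describes the whole ideal and applies to arbitrary elements of $\Aff(V)$, not only idempotents. The one place you should say a touch more is the containment $\supseteq$: you verify it only for $\alpha=\beta=1$, giving $fx+w$; for a general $s=rf\in Rf$ one should note that $\alpha=r$, $\beta=1$, $u=w-rc$ produces $sx+w$, which is easy but needs to be said. Your sharpened $\JJ$-equivalence statement ($Re=Rf$ with $e,f\in E(R)$ forcing $e=f$) matches the paper's. Overall the proposal is correct; the tradeoff is that the paper's route is quicker by leaning on the homomorphism $\pi$ and commutativity of $M(R)$, while yours is self-contained and yields an explicit description of $\Aff(V)(fx+c)\Aff(V)$.
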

\begin{proof}
If $f(x)=ax+b$, then $f^2(x) = a^2x+ab+b$ and so $f(x)=f^2(x)$ if and only if $a^2=a$ and $ab=0$.  This proves the first item.

For the second item, suppose that $e,f\in E(R)$ and $eb=0=fc$. Let $g(x)=ex+b$ and $h(x)=fx+c$.  Then $gh(x)= efx+ec+b$, $ef\in E(R)$ and $ef(ec+b) = efc+feb=0$.  Thus $gh(x)$ is an idempotent by the first item and so the idempotents form a submonoid of $\Aff(R)$. Moreover, $ghg(x) = efex+efb+ec+b=efx+ec+b$ as $eb=0$.  Therefore, $ghg(x)=gh(x)$ and so $E(\Aff(V))$ is a left regular band.

Let us prove the third item.  Let $g(x)=ex+b,h(x)=fx+c\in E(\Aff(V))$.  If $g(x)\leq_{\JJ} h(x)$, then $e=\pi(g(x))\leq_{\JJ} \pi(h(x))=f$.  But $M(R)$ is a commutative monoid and so $Re\subseteq Rf$.  Conversely, if $Re\subseteq Rf$, then $ef=e$ and so $gh(x) = efx+ec+b=ex+efc+b=ex+b=g(x)$ because $fc=0$.  Thus $g(x)\leq_{\mathscr J} h(x)$. The final statement of the third item follows because $ex+b\JJ fx+c$ if and only if $Re=Rf$.  But if $Re=Rf$, then $e=ef=fe=f$.

The last item follows because an idempotent $f$ belongs to $Re$ if and only if $fe=f$ if and only if $Rf\subseteq Re$.
\end{proof}

Proposition~\ref{p:Aff.mon.idem} implies that the poset of regular $\mathscr J$-classes of $\Aff(V)$ is isomorphic to the poset of idempotent generated principal ideals of $R$.  The latter is, in fact, a lattice since $Re\cap Rf=Ref$ and $Re+Rf = R(e+f-ef)$ and one readily checks $e+f-ef$ is an idempotent.  In fact, the lattice of idempotent-generated principal ideals of $R$ is a boolean algebra where the complement of $Re$ is $R(1-e)$ and the atoms are the ideals generated by the primitive idempotents.

\begin{Cor}\label{c:poset.reg}
The poset of regular $\mathscr J$-classes of $\Aff(V)$ is isomorphic to the lattice of idempotent generated principal ideals of the ring $R$ via the mapping taking the $\mathscr J$-class of $ex+b$ to $Re$.
\end{Cor}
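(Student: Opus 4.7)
The plan is to show that the map $J \mapsto Re$, sending the regular $\mathscr J$-class of an idempotent $ex+b$ to the ideal $Re$, is a well-defined order isomorphism; nearly every step is an immediate consequence of Proposition~\ref{p:Aff.mon.idem}, so essentially no new work is required.

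First I would verify well-definedness. Every regular $\mathscr J$-class $J$ contains an idempotent, and by part (1) of Proposition~\ref{p:Aff.mon.idem} any such idempotent must have the form $ex+b$ with $e \in E(R)$ and $eb = 0$. If $ex+b$ and $fx+c$ are two idempotents in the same $\mathscr J$-class, then part (3) of the proposition gives $e = f$, and hence $Re = Rf$. So the map $J \mapsto Re$ is well-defined.

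Next I would check that it is a bijection onto the set of idempotent-generated principal ideals. Surjectivity is immediate: given $e \in E(R)$, the element $ex$ lies in $E(\Aff(V))$ (take $b = 0$, which satisfies $eb = 0$ trivially) and its $\mathscr J$-class maps to $Re$. Injectivity is again part (3): if the $\mathscr J$-classes of $ex+b$ and $fx+c$ both map to the same ideal, then $Re = Rf$ forces $e = ef = fe = f$ (using commutativity and idempotency), and hence by part (3) the two idempotents are $\mathscr J$-equivalent.

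Finally I would confirm the order structure. Part (3) of Proposition~\ref{p:Aff.mon.idem} says $ex+b \leq_{\JJ} fx+c$ if and only if $Re \subseteq Rf$, so our map is simultaneously order-preserving and order-reflecting. Since the codomain was already noted to be a lattice (with $Re \wedge Rf = Ref$ and $Re \vee Rf = R(e+f-ef)$, and $1-e$ providing complements), the isomorphism automatically transports this lattice structure back to the poset of regular $\mathscr J$-classes. There is no real obstacle here; the content of the corollary is entirely packaged in parts (1) and (3) of the previous proposition, and the corollary simply reinterprets that information poset-theoretically.
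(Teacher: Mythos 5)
Your proof is correct and takes essentially the same route as the paper, which doesn't even write out a separate argument: the preceding paragraph merely asserts that Proposition~\ref{p:Aff.mon.idem} implies the corollary, and your careful unwinding of parts (1) and (3) --- well-definedness from the characterization $ex+b\JJ fx+c\iff e=f$, surjectivity from $ex\in E(\Aff(V))$, injectivity from $Re=Rf\Rightarrow e=f$, and the order isomorphism from $ex+b\leq_{\JJ} fx+c\iff Re\subseteq Rf$ --- is exactly the intended argument.
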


The elements of the form $g(x)=ex$ with $e\in E(R)$ form a commutative submonoid of $E(\Aff(V))$ and a transversal of the set of regular $\mathscr J$-classes. Moreover, $Re\subseteq Rf$ if and only if $e=ef=fe$.

\begin{Prop}\label{p:idem.local}
Let $e\in E(R)$ and let $g(x)=ex$ be the corresponding idempotent of $\Aff(V)$. Then $g(x)\Aff(V)g(x)=\Aff(eV)$, where $eV$ is viewed as an $Re$-module in the natural way, and hence the maximal subgroup of $\Aff(V)$ at $g(x)$ is $U(\Aff(eV))$.
\end{Prop}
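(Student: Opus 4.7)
The plan is to show directly that the submonoid $g\Aff(V)g$ coincides, under the natural identification of its elements as affine maps, with $\Aff(eV)$, where $eV$ is regarded as an $Re$-module. First I would compute $ghg(x)$ for an arbitrary $h(x)=ax+b\in\Aff(V)$. A short computation using the composition rule in $\Aff(V)$ (together with $e^2=e$ and commutativity of $R$) yields $ghg(x)=aex+eb$. As $a$ ranges over $R$ and $b$ over $V$, the coefficient $ae$ ranges over $Re$ and the translation $eb$ ranges over $eV$, so one obtains
\[g\Aff(V)g=\{sx+c\mid s\in Re,\ c\in eV\}\]
as a subset of $\Aff(V)$.

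Next I would verify that this set, endowed with the operation inherited from $\Aff(V)$, is isomorphic to $\Aff(eV)=eV\rtimes M(Re)$. The product rule in $\Aff(V)$ gives $(s_1x+c_1)(s_2x+c_2)=s_1s_2x+s_1c_2+c_1$, which reproduces the defining product of $\Aff(eV)$ once one observes that (i)~$Re$ is closed under multiplication, (ii)~$eV$ is an $R$-submodule of $V$, so $s_1c_2\in eV$, and (iii)~the identity $g=ex$ of the submonoid matches the identity $y\mapsto ey=y$ of $\Aff(eV)$ because $e$ is the identity of the ring $Re$. The maximal subgroup of $\Aff(V)$ at $g$ is by definition the group of units of $g\Aff(V)g$, which under this isomorphism is exactly $U(\Aff(eV))$.

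The main subtlety, rather than an obstacle, is keeping track of the fact that the identity of the local monoid $g\Aff(V)g$ is $g$ itself and not the identity of $\Aff(V)$; this is exactly what makes the bijection above a \emph{unital} monoid isomorphism with $\Aff(eV)$, where $Re$ is viewed as a unital ring with identity $e$ and $eV$ as an $Re$-module by restriction of scalars along the projection $\varphi_e\colon R\to Re$.
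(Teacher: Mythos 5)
Your proof is correct and follows essentially the same route as the paper: compute $ghg(x)=aex+eb$ to see that $g\Aff(V)g=\{sx+c\mid s\in Re,\ c\in eV\}$, then identify this with $\Aff(eV)$. The paper's proof stops at the set-level equality, while you add a welcome sanity check that the induced multiplication on $g\Aff(V)g$ (with identity $g=ex$, not $1x$) matches the monoid operation of $\Aff(eV)$ viewed over the unital ring $Re$; this is a point the paper leaves implicit.
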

\begin{proof}
If $h(x)=ax+b$, then $ghg(x) = eaex+eb = eax+eb$.  It follows that $ax+b\in g(x)\Aff(V)g(x)$ if and only if $ea=ae=a$ and $eb=b$, that is, $g(x)\Aff(V)g(x) = \Aff(eV)$, establishing the proposition.
\end{proof}

It will be useful that the projections $\p_e\colon R\to Re$  and $\pi_e\colon V\to eV$, for $e\in E(R)$,  combine to yield a homomorphism $\Aff(V)\to \Aff(eV)$.

\begin{Prop}\label{p:extend.proj}
Let $e\in E(R)$.  Then there is a surjective homomorphism $\Phi_e(\Aff(V))\to \Aff(eV)$ given by $\Phi_e(ax+b) = \p_e(a)x+\pi_e(b)=aex+eb$.  Moreover, the restriction $\Phi_e\colon U(\Aff(V))\to U(\Aff(eV))$ is surjective.
\end{Prop}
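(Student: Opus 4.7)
The plan is to verify that $\Phi_e$ is well-defined, is a monoid homomorphism, and is surjective, and then to check the surjectivity statement for units using Proposition~\ref{p:unit.onto}.

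First, since every element of $\Aff(V)$ is uniquely determined by the pair $(a,b) \in R \times V$ giving it as $ax+b$, the formula $\Phi_e(ax+b) = aex + eb$ unambiguously defines a map $\Aff(V) \to \Aff(eV)$; the image lies in $\Aff(eV)$ because $ae \in Re$ and $eb \in eV$, so $aex + eb$ is genuinely an affine map of the $Re$-module $eV$. Sending $1 \cdot x + 0$ to $ex + 0$ yields the identity of $\Aff(eV)$, since $e$ is the identity of $Re$ and the identity map on $eV$ is $ex$.

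For the homomorphism property, I would compute directly: on one hand, $(ax+b)(cx+d) = ac\, x + ad + b$, which $\Phi_e$ sends to $ace\, x + e(ad+b) = ace\, x + ead + eb$. On the other hand,
\[
\Phi_e(ax+b)\Phi_e(cx+d) = (aex + eb)(cex + ed) = ae\cdot ce\, x + ae\cdot ed + eb = ace\, x + aed + eb,
\]
and these agree because $R$ is commutative, so $ead = aed$. Commutativity is the one place where the proof actually needs to use something, so I would highlight it here.

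For surjectivity onto $\Aff(eV)$, given any $\bar a x + \bar b \in \Aff(eV)$ with $\bar a \in Re$ and $\bar b \in eV$, I would simply note that $\bar a \in R$, $\bar b \in V$, and $\bar a e = \bar a$, $e\bar b = \bar b$, so $\Phi_e(\bar a x + \bar b) = \bar a x + \bar b$. Finally, for the surjectivity of the restriction to units, recall $U(\Aff(V)) = V \rtimes U(R)$ and $U(\Aff(eV)) = eV \rtimes U(Re)$. Given $\bar a x + \bar b \in U(\Aff(eV))$ with $\bar a \in U(Re)$, Proposition~\ref{p:unit.onto} provides $a \in U(R)$ with $\varphi_e(a) = ae = \bar a$; taking $b = \bar b \in V$ gives $ax + b \in U(\Aff(V))$ with $\Phi_e(ax+b) = \bar a x + \bar b$.

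There is no real obstacle here; the only non-formal input is commutativity of $R$ (for the homomorphism check) and Proposition~\ref{p:unit.onto} (for lifting units), both of which are in hand.
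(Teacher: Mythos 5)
Your proof is correct and follows essentially the same route as the paper: a direct computation verifying the homomorphism property (using commutativity of $R$), surjectivity via the observation that elements of $\Aff(eV)$ are fixed by $\Phi_e$, and Proposition~\ref{p:unit.onto} to lift units. The paper's own proof is just a terser version of exactly this argument.
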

\begin{proof}
Let $g(x)=ax+b$ and $h(x)=cx+d$.  Then $gh(x) = acx+ad+b$ and so $\Phi_e(gh(x)) = acex+ead+eb$.  On the other hand, $\Phi_e(g(x))\Phi_e(h(x))$ is the composition of $aex+eb$ and $cex+ed$, which is $aecex+aeed+eb=acex+ead+eb$. Also $\Phi_e(1x) = ex$, which is the identity of $\Aff(eV)$.  Therefore, $\Phi_e$ is a homomorphism.  It is surjective because if $a\in Re$ and $b\in eV$, then $ae=a$ and $eb=b$, whence $\Phi_e(ax+b) = ax+b$.  The final statement follows immediately from Proposition~\ref{p:unit.onto}.
\end{proof}

Note that if $e\in E(R)$, then  $e\wh{V}$ is an $R$-submodule of $\wh{V}$, and hence invariant under its group of units, $U(R)$.  Also note that $e\wh{V}$ is an $Re$-module. So the unit group $U(Re)$ acts on $e\wh{V}$ by automorphisms. The stabilizer in $U(Re)$ of $\chi\in e\wh{V}$ under this action shall be denoted $\Stab_{U(Re)}(\chi)$.  Notice that $\Stab_{U(Re)}(\chi)=e+\ann(\chi)e$, where $\ann(\chi)$ is the ideal of elements of $R$ that annihilate $\chi$; indeed, $r\chi=\chi=e\chi$ if and only if $r-e\in \ann(\chi)e$, if and only if $r\in e+\ann(\chi)e$ for $r\in Re$.  Our next proposition describes $e\wh{V}$.

\begin{Prop}\label{p:ehatR}
Let $R$ be a finite commutative ring, $V$ a finite $R$-module  and $e\in E(R)$.  Then the following are equivalent for $\chi\in \wh{V}$.
\begin{enumerate}
\item $\chi\in e\wh{V}$;
\item $\chi=\gamma\circ \pi_e$ with $\gamma\in \wh{eV}$;
\item $(1-e)V\subseteq \ker \chi$.
\end{enumerate}
\end{Prop}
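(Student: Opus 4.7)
The plan is to run the cyclic chain of implications $(1)\Rightarrow (3)\Rightarrow (2)\Rightarrow (1)$, using only the definition of the $R$-module structure on $\wh V$ (namely $(r\chi)(v)=\chi(rv)$), the fact that $\ker\pi_e=(1-e)V$, and the idempotency $e^2=e$.

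For $(1)\Rightarrow(3)$, I would write $\chi=e\psi$ with $\psi\in\wh V$. Then, for any $v\in V$, the computation $\chi((1-e)v)=\psi(e(1-e)v)=\psi(0)=1$ shows that $(1-e)V\subseteq\ker\chi$. This step is a one-line unfolding of the definition of the $R$-action.

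For $(3)\Rightarrow(2)$, the assumption $(1-e)V=\ker\pi_e\subseteq\ker\chi$ means that $\chi\colon V\to U(\mathbb C)$ is constant on fibers of the surjective additive map $\pi_e\colon V\to eV$. Hence it descends to a well-defined set map $\gamma\colon eV\to U(\mathbb C)$ with $\chi=\gamma\circ\pi_e$; because $\pi_e$ is a surjective group homomorphism and $\chi$ is a homomorphism, $\gamma$ is automatically a homomorphism, so $\gamma\in\wh{eV}$.

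For $(2)\Rightarrow(1)$, I would show the cleaner statement that $e\chi=\chi$, which witnesses $\chi\in e\wh V$ (taking $\psi=\chi$ itself). Indeed, given $\chi=\gamma\circ\pi_e$, for any $v\in V$ the element $ev\in eV$ satisfies $\pi_e(ev)=e^2v=ev$, so
\[
(e\chi)(v)=\chi(ev)=\gamma(\pi_e(ev))=\gamma(ev)=\gamma(\pi_e(v))=\chi(v),
\]
as required.

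None of the three implications is really an obstacle, since each is essentially the universal property of the quotient $V\to V/(1-e)V\cong eV$ combined with the identity $e(1-e)=0$. If anything, the step requiring the most care is $(3)\Rightarrow(2)$, where one must verify both the factorization (a set-theoretic claim) and the fact that the resulting $\gamma$ is a group homomorphism; both follow formally from the surjectivity of $\pi_e$.
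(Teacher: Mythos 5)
Your proof is correct and follows essentially the same route as the paper: the cyclic chain $(1)\Rightarrow(3)\Rightarrow(2)\Rightarrow(1)$, with the identical one-line computations using $e(1-e)=0$, the factorization through $\pi_e$ since $\ker\pi_e=(1-e)V$, and the verification $e\chi=\chi$ from $\pi_e(ev)=\pi_e(v)$. The only cosmetic difference is that in $(1)\Rightarrow(3)$ you write $\chi=e\psi$ where the paper implicitly uses $e\chi=\chi$ (via idempotency); both are fine.
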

\begin{proof}
If $\chi\in e\wh{V}$ and $v\in (1-e)V$, then \[\chi(v) = (e\chi)(v)=\chi(ev) = \chi(e(1-e)v)=\chi(0)=1\] and so $(1-e)V\subseteq\ker \chi$.  If $(1-e)V\subseteq \ker \chi$, then $\chi$ factors through $\pi_e$ as $(1-e)V=\ker \pi_e$.  If $\chi=\gamma\circ \pi_e$ with $\gamma\in \wh{eV}$, then $(e\chi)(v) = \chi(ev) = \gamma(\pi_e(ev)) = \gamma(eev)=\gamma(ev) = \gamma(\pi_e(v)) = \chi(v)$ and so $e\chi=\chi$.  Therefore, $\chi\in e\wh{V}$.
\end{proof}

It follows that we can identify $e\wh {V}$ with $\wh{eV}$ and that the action of $U(Re)$ on $e\wh{V}$ by automorphisms can be identified with the action of $U(Re)$ on $\wh{eV}$ by automorphisms.  In particular, the orbits of $U(Re)$ on $e\wh{V}$ and $U(R)$ on $e\wh{V}$ are the same because $\p_e\colon U(R)\to U(Re)$ given by $r\mapsto re$ is onto by Proposition~\ref{p:unit.onto}.

\subsection{The representation theory of the affine monoid}\label{ss:rep.aff}
 Using Theorem~\ref{t:CMP} we can completely describe the simple $\mathbb C\Aff(V)$-modules for a finite module $V$ over a finite commutative ring $R$. We take for a complete set of idempotent representatives of the regular $\mathscr J$-classes of $\Aff(V)$ the mappings $g_e(x)=ex$ with $e\in E(R)$ (this is justified by Proposition~\ref{p:Aff.mon.idem}).  By Proposition~\ref{p:idem.local} we can identify the maximal subgroup at $g_e(x)$ with $U(\Aff(eV))$.  Let $\mathrm{Sing}(\Aff(eV)) = \Aff(eV)\setminus U(\Aff(eV))$ be the ideal of singular affine mappings.  Then $\mathbb CU(\Aff(eV))\cong \mathbb C\Aff(eV)/\mathbb C\mathrm{Sing}(\Aff(eV))$ and hence any simple $\mathbb CU(\Aff(eV))$-module $W$ can be viewed as a simple $\mathbb C\Aff(eV)$-module via inflation. The surjective homomorphism $\Phi_e\colon \Aff(V)\to \Aff(eV)$ from Proposition~\ref{p:extend.proj} extends to a surjective homomorphism of $\mathbb C$-algebras \[\Phi_e\colon \mathbb C\Aff(V)\to\mathbb C\Aff(eV)\] and hence $W$ becomes a simple $\mathbb C\Aff(V)$-module via inflation along $\Phi_e$.  Concretely, if $w\in W$ and $ax+b\in \Aff(V)$, then the action of $ax+b$ on $w$ is given by
\begin{equation}\label{eq:module.action}
(ax+b)w = \begin{cases}(aex+eb)w, & \text{if}\ Ra\supseteq Re\\ 0, & \text{else}\end{cases}
\end{equation}
 in light of Proposition~\ref{p:unit.onto}.  Clearly, $g_e(x)$ is an apex for this $\mathbb C\Aff(V)$-module structure on $W$ and $g_e(x)W=W$ as a $\mathbb CU(\Aff(eV))$-module.  Therefore, all the simple $\mathbb C\Aff(V)$-modules are obtained in this fashion by Theorem~\ref{t:CMP}; that is, $W^\sharp=W$ as a vector space with the module structure given by \eqref{eq:module.action}.

It thus remains to describe the representation theory of $\mathbb CU(\Aff(M))$ for a finite module $M$ over a finite commutative ring $S$.  The case of interest for us will be rings of the form $S=Re$ and modules of the form $M=eV$ with $e\in E(R)$.
This is a very special case of the representation theory of semidirect products of the form $A\rtimes G$ with $A$ an abelian group and $G$ an arbitrary group that can be found in standard texts on group representation theory (cf.~\cite{serrerep}).  Here we use the semidirect product decomposition $U(\Aff(M)) = M\rtimes U(S)$.

The group $U(S)$ acts on $\wh{M}$ via $(s\chi)(v) = \chi(sv)$ for $s\in U(S)$ and $v\in M$.  If $\chi\in \wh{M}$, let $\Stab_{U(S)}(\chi)$ be the stabilizer of $\chi$ in $U(S)$. If $\rho\in \wh{\Stab_{U(S)}(\chi)}$, then we can define a degree one character $\chi\otimes \rho\colon M\rtimes \Stab_{U(S)}(\chi)\to U(\mathbb C)$ by $(\chi\otimes \rho)(ax+b) = \chi(b)\rho(a)$ for $a\in \Stab_{U(S)}(\chi)$ and $b\in M$.

\begin{Thm}\label{t:sdir.irred}
Let $M$ be a finite $S$-module with $S$ a finite commutative ring.  Let $\mathcal O_1,\ldots, \mathcal O_m$ be the orbits of $U(S)$ on $\wh{M}$ and fix $\chi_i\in \mathcal O_i$.  Then a complete set of representatives of the isomorphism classes of simple $\mathbb CU(\Aff(M))$-modules is given by the modules \[W_{(\mathcal O_i,\rho)} = \Ind_{M\rtimes \Stab_{U(S)}(\chi_i)}^{U(\Aff(M))} \chi_i\otimes \rho\] with $\rho\in \wh{\Stab_{U(S)}(\chi_i)}$.
\end{Thm}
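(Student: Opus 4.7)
The proof I envisage is the standard Clifford--Mackey analysis for a semidirect product $A\rtimes G$ with $A$ abelian, specialized to $U(\Aff(M))=M\rtimes U(S)$. The commutativity of $S$ plays only a secondary role in the argument; what matters is that the normal subgroup $M$ is abelian and that $U(S)$ acts on $M$ (hence on $\wh{M}$).

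The plan is to carry out three steps. First, I would show that each $W_{(\mathcal O_i,\rho)}$ is simple. Let $H_i=M\rtimes \Stab_{U(S)}(\chi_i)$ and $G=U(\Aff(M))$. By Frobenius reciprocity (Theorem~\ref{t:frob.rec}),
\[
\End_{\mathbb CG}\bigl(\Ind_{H_i}^G(\chi_i\otimes\rho)\bigr)\cong \Hom_{\mathbb CH_i}\bigl(\chi_i\otimes\rho,\Res^G_{H_i}\Ind_{H_i}^G(\chi_i\otimes\rho)\bigr).
\]
I would then apply the Mackey decomposition theorem (Theorem~\ref{t:mackey.dec}) with $H=K=H_i$ and a system of double coset representatives $T\subseteq U(S)$ for $H_i\backslash G/H_i$, which is easy to describe because $M$ is normal and $U(S)$ is abelian: $T$ can be taken as a transversal of $\Stab_{U(S)}(\chi_i)$ in $U(S)$. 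For $t\in T$, the conjugate module $(\chi_i\otimes\rho)^t$ restricted to $M$ is the character $t^{-1}\chi_i$. Since $M$ is abelian and $\Hom_{\mathbb CM}(\chi_i,t^{-1}\chi_i)=0$ unless $t\in \Stab_{U(S)}(\chi_i)$, only the double coset of $1$ contributes, and that summand reduces to $\End_{\Stab_{U(S)}(\chi_i)}(\rho)=\mathbb C$ by Schur's lemma. Hence $\End_{\mathbb CG}(W_{(\mathcal O_i,\rho)})=\mathbb C$, so $W_{(\mathcal O_i,\rho)}$ is simple.

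Second, I would show that the $W_{(\mathcal O_i,\rho)}$ are pairwise non-isomorphic. Restricting $W_{(\mathcal O_i,\rho)}$ to $M$, one sees (again via the Mackey computation or directly from the induced-module construction) that the set of characters of $M$ appearing is exactly the orbit $\mathcal O_i$. So different orbits give non-isomorphic modules. Within a fixed orbit, the $\chi_i$-isotypic component of $W_{(\mathcal O_i,\rho)}$ is stable under $\Stab_{U(S)}(\chi_i)$ and is isomorphic, as a module for this stabilizer, to $\rho$; distinct $\rho$ therefore yield distinct $W_{(\mathcal O_i,\rho)}$.

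Third, I would verify completeness by a dimension count. We have $\dim W_{(\mathcal O_i,\rho)}=[G:H_i]\dim\rho=|\mathcal O_i|\dim\rho$. Summing over all pairs,
\[
\sum_{i,\rho}(\dim W_{(\mathcal O_i,\rho)})^2=\sum_i|\mathcal O_i|^2\sum_{\rho\in\wh{\Stab_{U(S)}(\chi_i)}}(\dim\rho)^2=\sum_i|\mathcal O_i|^2|\Stab_{U(S)}(\chi_i)|=|U(S)|\sum_i|\mathcal O_i|=|U(S)||\wh M|=|U(\Aff(M))|,
\]
which matches $\sum_{[S]\in\Irr}(\dim S)^2=|G|$ for the finite group $G=U(\Aff(M))$. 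Combined with simplicity and distinctness, this forces the $W_{(\mathcal O_i,\rho)}$ to be a complete list of isomorphism classes of simple modules.

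The main obstacle is the Mackey computation in Step~1: one must carefully choose the double-coset transversal, verify that $H_i$ is normal in $G$ (which it is, since $M$ is normal and $U(S)$ is abelian, so $\Stab$ is normal in $U(S)$), and check that the conjugation action on $\chi_i\otimes\rho$ behaves as expected. Once that is set up, the orthogonality of distinct characters of the abelian group $M$ collapses the Mackey sum to a single term, and Schur's lemma finishes the argument.
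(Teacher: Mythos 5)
Your argument is correct, and it is worth noting how it sits relative to the paper: the paper gives no argument of its own here, but simply invokes \cite[Proposition~25]{serrerep}, the general classification of irreducibles of $A\rtimes G$ with $A$ abelian, specialized to $U(\Aff(M))=M\rtimes U(S)$. What you have written is a self-contained proof of that cited result using only the tools the paper has already stated (Theorems~\ref{t:frob.rec} and~\ref{t:mackey.dec}): Mackey's irreducibility criterion collapses to a single double coset because $M$ is normal, $U(S)$ is abelian and distinct characters of the abelian group $M$ are orthogonal, giving $\End_{\mathbb CG}(W_{(\mathcal O_i,\rho)})=\mathbb C$; restriction to $M$ and to the $\chi_i$-isotypic component separates the parameters; and completeness follows from the sum-of-squares identity $\sum_i|\mathcal O_i|^2|\Stab_{U(S)}(\chi_i)|=|U(S)|\,|\wh M|=|U(\Aff(M))|$ (using $|\wh M|=|M|$). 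This last step is where you genuinely diverge from Serre's own proof, which establishes completeness not by counting but by restricting an arbitrary irreducible to the abelian normal subgroup and analyzing its isotypic components; your counting argument is equally valid for finite groups and arguably more economical, at the cost of being special to the finite setting. Two cosmetic points: since $S$ is commutative, $U(S)$ and hence $\Stab_{U(S)}(\chi_i)$ are abelian, so every $\rho\in\wh{\Stab_{U(S)}(\chi_i)}$ is one-dimensional and the factors $\dim\rho$ in your count are all $1$; and the normality of $H_i=M\rtimes\Stab_{U(S)}(\chi_i)$ in $U(\Aff(M))$, while true and convenient (it makes $H_i\cap tH_it^{-1}=H_i$), is not strictly needed for the Mackey computation to go through.
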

\begin{proof}
This is the specialization to $U(\Aff(M))=M\rtimes U(S)$ of the general theory of irreducible representations of semidirect products $A\rtimes G$ with $A$ abelian described in~\cite[Proposition~25]{serrerep}.
\end{proof}

Let $f\in E(S)$.  Then $U(\Aff(fM))$ acts transitively on $fM$ by permutations via the natural action $g(v)=av+b$ for $g(x) =ax+b\in U(\Aff(fM))$ and $v\in fM$.  Hence $U(\Aff(M))$ acts transitively on $fM$ by permutations via inflation along the surjective homomorphism $\Phi_f\colon U(\Aff(M))\to U(\Aff(fM))$ given by $\Phi_f(ax+b) = afx+fb$, cf.~Proposition~\ref{p:extend.proj}.  Notice that $ax+b\in U(\Aff(M))$ stabilizes $0$ under this action if and only if $fb=0$, which occurs if and only if $b\in (1-f)M$.  Hence $\Stab_{U(\Aff(M))}(0) = (1-f)M\rtimes U(S)$.   Therefore, the corresponding permutation module $\mathbb CfM$ for $\mathbb CU(\Aff(M))$ is the induced module \[\Ind_{(1-f)M\rtimes U(S)}^{U(\Aff(M))} \mathbf 1_{(1-f)M\rtimes U(S)}\] where we recall that $\mathbf 1_G$ denotes the trivial representation of a group $G$.  We record this as the first item of the following proposition.

\begin{Prop}\label{p:Sf.as.a.mod}
Let $S$ be a finite commutative ring, $M$ a finite $S$-module and $f\in E(S)$.  Let $\mathcal O_1,\ldots, \mathcal O_t$ be the orbits of $U(S)$ on $f\wh{M}$ (which is a  $U(\Aff(M))$-invariant subgroup of $\wh{M}$) and let $\chi_i\in \mathcal O_i$.
\begin{enumerate}
  \item The module $\mathbb CfM=\Ind_{(1-f)M\rtimes U(S)}^{U(\Aff(M))} \mathbf 1_{(1-f)M\rtimes U(S)}$.
  \item The decomposition of $\mathbb CfM$ into simple $\mathbb CU(\Aff(M))$-modules is giv\-en by
  \[
  \mathbb CfM = \bigoplus_{i=1}^t W_{(\mathcal O_i,\mathbf 1_{\Stab_
{U(S)}(\chi_i)})}
\]
(retaining the notation of Theorem~\ref{t:sdir.irred}).
\end{enumerate}
\end{Prop}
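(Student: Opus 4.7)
The strategy is to Fourier-diagonalize the action of the abelian normal subgroup $M$ of $U(\Aff(M))$ on $\mathbb{C}fM$ and then identify each piece as an induced simple module $W_{(\mathcal O_i,\mathbf 1)}$.

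First I would introduce a Fourier basis of $\mathbb CfM$ indexed by $f\widehat M$. By Proposition~\ref{p:ehatR}, $f\widehat M$ is naturally identified with $\widehat{fM}$, so $|f\widehat M|=|fM|$ and the vectors
\[
e_\chi := \sum_{v\in fM}\overline{\chi(v)}\,[v],\qquad \chi\in f\widehat M,
\]
form a basis. Using the formula $\Phi_f(ax+b)=afx+fb$ from Proposition~\ref{p:extend.proj} together with $f\chi=\chi$, a direct change-of-variable calculation shows
\[
(ax+b)\cdot e_\chi \;=\; \chi(a^{-1}b)\, e_{a^{-1}\chi}
\]
for all $ax+b\in U(\Aff(M))$. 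Hence $M$ preserves each line $\mathbb Ce_\chi$ and $U(S)$ permutes the lines through its (dual) action on $f\widehat M$.

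From this it is immediate that $\mathbb CfM=\bigoplus_{i=1}^t L_i$, where $L_i=\mathrm{span}\{e_\chi:\chi\in\mathcal O_i\}$ is a $\mathbb CU(\Aff(M))$-submodule of dimension $|\mathcal O_i|$; moreover, since $U(S)$ acts transitively on $\mathcal O_i$, the module $L_i$ is cyclic, generated by $e_{\chi_i}$. For $a\in \Stab_{U(S)}(\chi_i)$ one has $a^{-1}\chi_i=\chi_i$, and hence $\chi_i(a^{-1}b)=(a^{-1}\chi_i)(b)=\chi_i(b)$, so the subgroup $M\rtimes \Stab_{U(S)}(\chi_i)$ stabilizes the line $\mathbb Ce_{\chi_i}$ and acts on it by the character $\chi_i\otimes \mathbf 1_{\Stab_{U(S)}(\chi_i)}$.

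Finally, by Frobenius reciprocity (or the universal property of induction) there is a nonzero $\mathbb CU(\Aff(M))$-homomorphism
\[
\Ind_{M\rtimes \Stab_{U(S)}(\chi_i)}^{U(\Aff(M))}(\chi_i\otimes \mathbf 1)\longrightarrow L_i
\]
sending $1\otimes 1\mapsto e_{\chi_i}$; since $L_i$ is cyclic on $e_{\chi_i}$ this map is surjective, and the dimension count $[U(S):\Stab_{U(S)}(\chi_i)]=|\mathcal O_i|=\dim L_i$ forces it to be an isomorphism. Thus $L_i\cong W_{(\mathcal O_i,\mathbf 1)}$, yielding the stated decomposition. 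The main (mild) obstacle is bookkeeping the dual action of $U(S)$ on $f\widehat M$ carefully enough to see the scalar $\chi_i(a^{-1}b)$ collapse to $\chi_i(b)$ on the stabilizer and to confirm that the linear independence of the Fourier basis prevents any element outside $M\rtimes\Stab_{U(S)}(\chi_i)$ from preserving the line $\mathbb Ce_{\chi_i}$.
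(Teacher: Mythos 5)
Your argument is correct, and it takes a genuinely different route from the paper for part (2). The paper first identifies $\mathbb CfM$ as an induced module (part (1)) and then computes the multiplicity of each $W_{(\mathcal O_i,\rho)}$ via Frobenius reciprocity combined with a Mackey decomposition across the single double coset $((1-f)M\cdot U(S))\backslash U(\Aff(M))/(M\cdot\Stab_{U(S)}(\chi))$. You instead build an explicit Fourier basis $\{e_\chi : \chi\in f\wh M\}$ of $\mathbb CfM$, compute the action $(ax+b)\cdot e_\chi = \chi(a^{-1}b)\,e_{a^{-1}\chi}$ directly, observe that the $U(S)$-orbits of characters give submodules $L_i$, and exhibit each $L_i$ as a quotient (hence, by dimension count, an isomorph) of the corresponding induced module. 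Your computation is right: $\chi(a^{-1}fb)=\chi(a^{-1}b)$ because $f\chi=\chi$, so the scalar simplifies as you say. The trade-off is that the paper's method is more uniform (it is a multiplicity computation that works in general semidirect-product situations), while yours is more constructive — it hands you the explicit matrix form of the decomposition rather than just the composition multiplicities, and it would let you write down the eigenvectors explicitly if desired. One small omission: you never actually prove part (1) of the proposition — the identification of $\mathbb CfM$ with $\Ind_{(1-f)M\rtimes U(S)}^{U(\Aff(M))}\mathbf 1$ — but this is just the orbit-stabilizer identification of a transitive permutation module with an induced trivial representation, which the paper records in the paragraph preceding the statement, and your decomposition of $\mathbb CfM$ sidesteps the need for it entirely.
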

\begin{proof}
The first item was proved in the discussion immediately preceding the statement of the proposition.  To prove the second item, we apply Frobenius reciprocity and the Mackey decomposition.  Let $\chi\in \wh{M}$, $\rho\in \wh{\Stab_{U(S)}(\chi)}$ and let $\mathcal O$ be the orbit $\chi$ under $U(S)$. To decongest notation, we shall identify $M$ with the subgroup of translations and $U(S)$ with the subgroup of dilations and use the notation of internal semidirect products. Then by Frobenius reciprocity we have
\begin{equation}
\label{eq:decomp.perm.modu}
\begin{split}
& \left[\Ind_{(1-f)M\cdot U(S)}^{U(\Aff(M))} \mathbf 1_{(1-f)M\cdot U(S)}:W_{(\mathcal O,\rho)} \right] \\
& = \left[\Res^{U(\Aff(M))}_{(1-f)M\cdot U(S)} W_{(\mathcal O,\rho)}:\mathbf 1_{(1-f)M\cdot U(S)}\right].
\end{split}
\end{equation}

To compute the right hand side of \eqref{eq:decomp.perm.modu}, we apply the Mackey decomposition to \[W_{(\mathcal O,\rho)}=\Ind_{M\cdot \Stab_{U(S)}(\chi)}^{U(\Aff(M))} \chi\otimes \rho.\]  Let $h(x)\in U(\Aff(M))$.  Then, since $M$ is a normal subgroup of $U(\Aff(M))$, we have
\begin{align*}
& (1-f)M\cdot U(S)h(x)M\cdot \Stab_{U(S)}(\chi) \\
& = (1-f)M\cdot U(S)\cdot Mh(x)\Stab_{U(S)}(\chi)=U(\Aff(M)),
\end{align*}
and so there is only one double coset.  Therefore, using the Mackey decomposition and  that
\[
((1-f)M\cdot U(S))\cap (M\cdot \Stab_{U(S)}(\chi))=(1-f)M\cdot \Stab_{U(S)}(\chi)
\]
yields
\begin{equation}
\label{eq:decomp.perm.modu.two}
\begin{split}
& \Res^{U(\Aff(M))}_{(1-f)M\cdot U(S)} \Ind_{M\cdot \Stab_{U(S)}(\chi)}^{U(\Aff(M))} \chi\otimes \rho \\
&= \Ind_{(1-f)M\cdot \Stab_{U(S)}(\chi)}^{(1-f)M\cdot U(S)} \Res^{M\cdot \Stab_{U(S)}(\chi)}_{(1-f)M\cdot \Stab_{U(S)}(\chi)}\chi\otimes \rho.
\end{split}
\end{equation}
Another application of Frobenius reciprocity to \eqref{eq:decomp.perm.modu.two} shows that the right hand side of \eqref{eq:decomp.perm.modu} is equal to
\[
\left[\Res^{(1-f)M\cdot U(S)}_{(1-f)M\cdot \Stab_{U(S)}(\chi)} \mathbf 1_{(1-f)M\cdot U(S)}:\Res^{M\cdot \Stab_{U(S)}(\chi)}_{(1-f)M\cdot \Stab_{U(S)}(\chi)}\chi\otimes \rho\right],
\]
which is $1$ if $(1-f)M\subseteq \ker \chi$ and $\rho=\mathbf 1_{\Stab_{U(S)}}(\chi)$ and $0$, otherwise.  By Proposition~\ref{p:ehatR}, $(1-f)M\subseteq \ker \chi$ if and only if $\chi\in f\wh{M}$. This completes the proof.
\end{proof}

\begin{Rmk}\label{r:further.down}
We remark that if $e\in E(S)$ and $f\in E(Se)$, then the $\mathbb C\Aff(M)$-module structure on $\mathbb CfM$ is the inflation along $\Phi_e\colon \Aff(M)\to \Aff(eM)$ of the $\mathbb C\Aff(eM)$-module $\mathbb CfeM=\mathbb CfM$.  To relate the decomposition in \eqref{eq:decomp.perm.modu} over $\Aff(M)$ with the corresponding decomposition over $\Aff(eM)$, we should identify $f\wh M$ with $f\wh{eM}$ (both of which are isomorphic to $\wh{fM}$) and use Proposition~\ref{p:inflate.ind}.
\end{Rmk}

It will also be convenient to decompose a module of the form $W_{(\mathcal O,\mathbf 1_{\Stab_{U(S)}(\chi)})}$ with $\chi\in \mathcal O$ over $U(S)$.

\begin{Prop}\label{p:decomp.over.mult}
Let $\mathcal O$ be an orbit of $U(S)$ on $\wh M$ and $\chi\in \mathcal O$.  Then
\[\Res^{U(\Aff(M))}_{U(S)} W_{(\mathcal O,\mathbf 1_{\Stab_{U(S)}(\chi)})} = \bigoplus_{\substack{\rho\in \wh{U(S)},\\ \Stab_{U(S)}(\chi)\subseteq \ker \rho}} \rho.\]
\end{Prop}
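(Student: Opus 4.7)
The plan is to apply the Mackey decomposition to the restriction-of-induction on the left-hand side, and then recognize the result as a permutation representation of the abelian group $U(S)$ that decomposes into characters.

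First I would unfold the definition: the module in question is $\Ind_{H}^{U(\Aff(M))}(\chi\otimes \mathbf 1_{\Stab_{U(S)}(\chi)})$, where $H=M\rtimes \Stab_{U(S)}(\chi)$, and I want to restrict this to the dilation subgroup $U(S)$ of $U(\Aff(M))=M\rtimes U(S)$. To set up Mackey, I compute the double cosets $U(S)\backslash U(\Aff(M))/H$. Because $M\trianglelefteq U(\Aff(M))$ and $M\subseteq H$, any element $mu$ with $m\in M$, $u\in U(S)$ satisfies $U(S)\, mu\, H=U(S)\,u\, M H = U(S)H$, so there is a single double coset. Also $U(S)\cap H=\Stab_{U(S)}(\chi)$ since the only translation in $U(S)$ is the identity. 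Thus Mackey gives
\[
\Res^{U(\Aff(M))}_{U(S)}\Ind_{H}^{U(\Aff(M))}(\chi\otimes \mathbf 1) \;\cong\; \Ind_{\Stab_{U(S)}(\chi)}^{U(S)} \Res^{H}_{\Stab_{U(S)}(\chi)}(\chi\otimes \mathbf 1).
\]
The restricted module is one-dimensional, and since $\chi\otimes \mathbf 1$ sends $ax+b\mapsto \chi(b)$, its restriction to the dilation subgroup $\Stab_{U(S)}(\chi)$ (where $b=0$) is just $\mathbf 1_{\Stab_{U(S)}(\chi)}$. Hence the restriction in the proposition is isomorphic to the permutation representation $\Ind_{\Stab_{U(S)}(\chi)}^{U(S)}\mathbf 1$.

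Finally I would decompose this permutation module. Because $U(S)$ is abelian (being the unit group of the commutative ring $S$), the induced module $\Ind_{\Stab_{U(S)}(\chi)}^{U(S)}\mathbf 1$ factors through the quotient $U(S)/\Stab_{U(S)}(\chi)$ and becomes its regular representation as a $U(S)$-module. The regular representation of a finite abelian group decomposes into one copy of each of its characters, and characters of $U(S)/\Stab_{U(S)}(\chi)$ are exactly those $\rho\in\wh{U(S)}$ with $\Stab_{U(S)}(\chi)\subseteq \ker\rho$; a dimension count (both sides equal $[U(S):\Stab_{U(S)}(\chi)]=|\mathcal O|$) together with Frobenius reciprocity $\langle \Ind_{\Stab_{U(S)}(\chi)}^{U(S)}\mathbf 1,\rho\rangle = \langle \mathbf 1,\Res \rho\rangle$ confirms each such $\rho$ appears with multiplicity one.

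The only mild obstacle is verifying the single-double-coset reduction carefully; once that is in place, Mackey plus abelian character theory delivers the claim immediately.
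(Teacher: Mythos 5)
Your proof is correct and follows essentially the same route as the paper: a single double coset by normality of $M$, Mackey decomposition reducing to $\Ind_{\Stab_{U(S)}(\chi)}^{U(S)}\mathbf 1$, and Frobenius reciprocity (equivalently, character theory of the abelian quotient $U(S)/\Stab_{U(S)}(\chi)$) to identify the summands. The only addition is your explicit check that $\Res^{H}_{\Stab_{U(S)}(\chi)}(\chi\otimes\mathbf 1)$ is trivial, a detail the paper leaves implicit.
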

\begin{proof}
We again identify $M$ with the subgroup of translations and $U(S)$ with the subgroup of dilations and use internal semidirect product notation.  If $h(x)\in U(\Aff(M))$, then the double coset \[U(S)h(x)M\cdot \Stab_{U(S)}(\chi) = U(S)\cdot Mh(x)\cdot \Stab_{U(S)}(\chi) = U(\Aff(M))\] by normality of $M$ and so there is only one double coset.  The Mackey decomposition and the equality $U(S)\cap M\cdot \Stab_{U(S)}(\chi) =\Stab_{U(S)}(\chi)$ then yield
\begin{equation}\label{eq:res.mult}
\Res^{U(\Aff(M))}_{U(S)}\Ind_{M\cdot \Stab_{U(S)}(\chi)}^{U(\Aff(M))} \chi\otimes \mathbf 1_{\Stab_{U(S)}}= \Ind_{\Stab_{U(S)}(\chi)}^{U(S)} \mathbf 1_{\Stab_{U(S)}(\chi)}.
\end{equation}
Applying Frobenius reciprocity to \eqref{eq:res.mult} shows that the multiplicity of $\rho\in \wh{U(S)}$ as a summand in the right hand side of \eqref{eq:res.mult} is $1$ if $\rho|_{\Stab_{U(S)}(\chi)}=\mathbf 1_{\Stab_{U(S)}(\chi)}$ and $0$, otherwise.  This completes the proof.
\end{proof}

Let us return now to our original finite commutative ring $R$ and a finite $R$-module $V$.  We wish to find the composition factors of $\mathbb CV$ as a $\mathbb C\Aff(V)$-module.  The module structure on $\mathbb CV$ is just the linear extension of the natural action where $h(x)=ax+b$ acts on $v\in V$ by $h(v)=av+b$.  The character $\theta$ of this module is given by \[\theta(ax+b) = \left|\{v\in V\mid av+b=v\}\right|.\] Let $\mu$ denote the M\"obius function of the lattice $\Lambda(R)$ of idempotent generated principal ideals of $R$ and $\zeta$ its zeta function. Note that if $e,f\in E(R)$, then $Re=Rf$ if and only if $e=f$ and $Rf\subseteq Re$ if and only if $f=ef=fe$.  We shall need the following observation.  If $\chi \in \wh{V}$ and if $e\chi=\chi=f\chi$, then $ef\chi=\chi$. Hence, there exists $e_{\chi}\in E(R)$ such that $e_{\chi}\chi=\chi$ and, for all $f\in E(R)$, $f\chi=f$  if and only if $Re_{\chi}\subseteq Rf$.  Also note that if $\chi$ and $\chi'$ are in the same $U(R)$ orbit, then $e_{\chi}=e_{\chi'}$ because $f\chi=\chi$ if and only if $f\chi'=\chi'$.  Thus we put $e_{\mathcal O}=e_{\chi}$ for any $\chi$ in the orbit $\mathcal O$ of $U(R)$ on $\wh{V}$.

\begin{Thm}\label{t:comp.factors}
Let $R$ be a finite commutative ring, $V$ a finite $R$-module and $e\in E(R)$.  Let $\mathcal O_1,\ldots, \mathcal O_s$ be the orbits of $U(Re)$ on $e\wh{V}$, which we may identify with $\wh{eV}$. Let $\chi_i\in \mathcal O_i$.  Then the composition factors of $\mathbb CV$ with apex $e$ are exactly those $W_{(\mathcal O_i,\mathbf 1_{\Stab_{U(Re)}(\chi_i)})}^\sharp$ with \[\mathcal O_i\subseteq e\wh{V}\setminus \bigcup_{\substack{Rf\subsetneq Re,\\ f\in E(R)}}f\wh{V},\] that is, with $e_{\mathcal O_i}=e$ and they each  appear with multiplicity one.
\end{Thm}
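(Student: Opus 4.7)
The plan is to apply Theorem~\ref{t:mult} to $\Aff(V)$ at the idempotent $g_e(x) = ex$, whose maximal subgroup is $U(\Aff(eV))$ by Proposition~\ref{p:idem.local}. By Corollary~\ref{c:poset.reg}, the regular $\mathscr J$-classes weakly below $J_e$ correspond to idempotents $f \in E(R)$ with $Rf \subseteq Re$, and I would use the natural transversal $\{g_f(x) = fx\}$. A short direct computation exploiting the commutativity of $Re$ shows that for $g = ax+b \in U(\Aff(eV))$ one has $g_f g g_f(x) = fax + fb$; hence the character $\theta$ of the permutation module $\mathbb CV$ satisfies $\theta(g_f g g_f) = |\{v \in V : fav + fb = v\}|$.

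Next, I would apply Fourier inversion on the finite abelian group $V$ to rewrite $\overline{\theta(fax+fb)} = \sum_{\chi \in \wh V,\, fa\chi = \chi} \chi(fb)$. A brief check shows that $fa\chi = \chi$ is equivalent to $f\chi = \chi$ and $a\chi = \chi$ holding simultaneously, and $f\chi = \chi$ forces $\chi(fb) = \chi(b)$; moreover the condition $f\chi = \chi$ reads $Re_\chi \subseteq Rf$ in terms of the minimal idempotent $e_\chi$ defined in the paragraph preceding the theorem. Exchanging the sums over $f$ and $\chi$ in Theorem~\ref{t:mult} and applying M\"obius inversion on the boolean lattice $\Lambda(R)$, the identity $\sum_{Re_\chi \subseteq Rf \subseteq Re} \mu(Rf, Re) = \delta_{Re_\chi, Re}$ collapses the $f$-sum and leaves
\[
[\mathbb CV : S^\sharp] = \sum_{\substack{\chi \in \wh V\\ e_\chi = e}} \frac{1}{|U(\Aff(eV))|} \sum_{\substack{ax+b \in U(\Aff(eV))\\ a \in \Stab_{U(Re)}(\chi)}} \chi_S(ax+b)\, \chi(b),
\]
where $\chi_S$ denotes the character of $S = W_{(\mathcal O, \rho)}$.

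I would then recognise the inner sum as a Frobenius-reciprocity pairing: with $H_\chi = eV \rtimes \Stab_{U(Re)}(\chi)$, the map $ax+b \mapsto \chi(b)$ is precisely the one-dimensional character $\chi \otimes \mathbf 1$ of $H_\chi$ used in Theorem~\ref{t:sdir.irred}, so the sum equals $|H_\chi|\,\langle \chi_S|_{H_\chi}, \overline{\chi \otimes \mathbf 1}\rangle_{H_\chi}$. Frobenius reciprocity (Theorem~\ref{t:frob.rec}), orthonormality of irreducible characters, and the fact that $-1 \in U(R)$ (so that the orbits of $\chi$ and $\chi^{-1}$ coincide) together yield that this quantity equals $1$ exactly when $S = W_{(\mathcal O', \mathbf 1)}$ with $\mathcal O'$ the $U(Re)$-orbit of $\chi$, and $0$ otherwise. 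The orbit-stabilizer identity $|H_\chi|/|U(\Aff(eV))| = 1/|\mathcal O'|$ then shows that summing over the $\chi$ in a fixed orbit $\mathcal O$ with $e_\mathcal O = e$ gives multiplicity exactly one, while orbits with $e_\mathcal O \neq e$ or with $\rho \neq \mathbf 1$ contribute zero.

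The principal obstacle I foresee is purely bookkeeping: pairing up the complex conjugate built into Theorem~\ref{t:mult} with the inverse-character side of Frobenius reciprocity, and confirming that the M\"obius step on $\Lambda(R)$ cleanly isolates the orbits lying in $e\wh V \setminus \bigcup_{Rf \subsetneq Re} f\wh V$.
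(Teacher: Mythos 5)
Your proposal is correct, and it shares the paper's skeleton---Theorem~\ref{t:mult} applied at the idempotent $g_e(x)=ex$ with the transversal $g_f(x)=fx$, followed by the M\"obius collapse on the Boolean lattice of idempotent-generated principal ideals---but the middle of the argument is organized along a genuinely different route. The paper recognizes $\theta\circ\Phi_f$ as the character of the permutation module $\mathbb C fM$ (with $M=eV$), converts the inner sum into the multiplicity $[\mathbb CfM:W_{(\mathcal O_i,\rho)}]$ by the first orthogonality relations, and then invokes Proposition~\ref{p:Sf.as.a.mod}, whose proof rests on the Mackey decomposition (Theorem~\ref{t:mackey.dec}) and two applications of Frobenius reciprocity; only afterwards does it perform the M\"obius inversion, at the level of module multiplicities. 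You instead expand the fixed-point count $\theta(fax+fb)$ into additive characters of $V$, carry out the M\"obius inversion character-by-character (which makes the isolation of the condition $e_\chi=e$ quite transparent), and finish with a single application of Frobenius reciprocity against the irreducible module $\Ind_{H_\chi}^{U(\Aff(eV))}(\chi^{-1}\otimes\mathbf 1)$ supplied by Theorem~\ref{t:sdir.irred}, together with orbit-stabilizer counting; this bypasses Proposition~\ref{p:Sf.as.a.mod} and the Mackey decomposition entirely. The conjugation bookkeeping you flag is harmless: $\chi^{-1}=(-e)\chi$ lies in the same $U(Re)$-orbit as $\chi$ and has the same stabilizer, so $\overline{\chi\otimes\mathbf 1}$ induces the same simple module $W_{(\mathcal O_\chi,\mathbf 1)}$, and the indexing set in your Fourier expansion is closed under inversion, so the missing complex conjugate does not change the sum. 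What each approach buys: the paper's is more modular, reusing the explicit decomposition of the permutation modules $\mathbb CfM$, while yours is more self-contained and elementary at this point, trading the Mackey machinery for a direct Fourier/fixed-point computation.
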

\begin{proof}
Let $\gamma$ be the character of $W_{(\mathcal O_i,\rho)}$. We again put $g_f(x)=fx$ for $f\in E(R)$. Note that if $h(x)=ax+b$, then $g_fhg_f(x)=g_fh(x) = afx+fb$.
Since $E(\Aff(V))$ is a submonoid by Proposition~\ref{p:Aff.mon.idem}, Theorem~\ref{t:mult}  yields that $[\mathbb CV:W_{(\mathcal O_i,\rho)}^\sharp]$, with $\rho\in \wh{\Stab_{U(Re)}(\chi_i)}$, is given by
\begin{equation}\label{eq:mobius}
\frac{1}{|\Aff(eV)|}\sum_{h(x)\in U(\Aff(eV))}\gamma(h(x))\sum_{Rf\subseteq Re}\overline{\theta(g_fh(x)g_f)}\mu(Rf,Re).
\end{equation}
But note that $\theta(g_fh(x)g_f) = \theta(g_fh(x)) = \theta(\Phi_f(h(x)))$, for $Rf\subseteq Re$, where \[\Phi_f\colon U(\Aff(eV))\to U(\Aff(fV))\] is the surjective homomorphism $\Phi_f(ax+b)=afx+fb$, cf.~Proposition~\ref{p:extend.proj}.  Set $S=Re$ and put $M=eV$, which is an $S$-module.  Note that $afv+fb=v$ implies that $v\in fV=fM$ and so $\theta\circ \Phi_f$ is the character of the $\mathbb CS$-module $\mathbb CfM$ of Proposition~\ref{p:Sf.as.a.mod}.  Thus, applying the first orthogonality relations, \eqref{eq:mobius} becomes
\begin{equation}\label{eq:mobius2}
\begin{split}
\sum_{Rf\subseteq Re}\mu(Rf,Re)\frac{1}{|\Aff(eV)|}\sum_{h(x)\in U(\Aff(eV))}\gamma(h(x))\overline{\theta(\Phi_f(h(x))}\\=\sum_{Rf\subseteq Re}[\mathbb CfM:W_{(\mathcal O_i,\rho)}]\mu(Rf,Re).
\end{split}
\end{equation}
But Proposition~\ref{p:Sf.as.a.mod} shows that this multiplicity is zero unless $\mathcal O_i\subseteq f\wh M=f\wh V$ (under the identification of both with $\wh{fV}$, cf.~Remark~\ref{r:further.down}) and $\rho=\mathbf 1_{\Stab_{U(Re)}(\chi_i)}$, in which case it is one.  Therefore, the right hand side of \eqref{eq:mobius2} equals zero unless $\rho=\mathbf 1_{\Stab_{U(Re)}(\chi_i)}$, in which case it is
\begin{equation}\label{eq:mobius3}
\sum_{Re_{\mathcal O_i}\subseteq Rf\subseteq Re}\zeta(Re_{\mathcal O_i},Rf)\mu(Rf,Re).
\end{equation}
But the quantity in \eqref{eq:mobius3} is zero unless $e_{\mathcal O_i}=e$, in which case it is one.  This completes the proof of the theorem.
\end{proof}

\section{Eigenvalues}
\label{sec:eigen}
Fix a finite commutative ring $R$ and a finite $R$-module $V$. Our goal is to prove Theorem~\ref{t:main2}.  We shall, in fact, prove a more general  result about the eigenvalues of certain elements of $\mathbb C\Aff(V)$ acting on $\mathbb CV$.  Let us begin with a description of how an operator supported on translations and constant on associates acts under an irreducible representation.

\begin{Prop}\label{p:add.part}
Let $P\in \mathbb CV$ be constant on associates.  View $P$ as an element of $\mathbb C\Aff(V)$ supported on translations. Let $W_{(\mathcal O,\rho)}^\sharp$ be a simple $\mathbb C\Aff(V)$-module with apex $e$, whence $\mathcal O=U(Re)\chi$ is an orbit of $U(Re)$ on $\wh{eV}$ (which we identify with $e\wh V$ and hence we identify $\mathcal O$ with $U(R)\chi$) and $\rho$ is a character of $\Stab_{U(Re)}(\chi)$.
Then $P$ acts on $W_{(\mathcal O,\rho)}^\sharp$ via scalar multiplication by
\[\wh P(\chi) = \sum_{b\in V} P(b)\chi(b).\]
\end{Prop}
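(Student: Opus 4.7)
The plan is to first translate the action of $P$ on $W^{\sharp}_{(\mathcal O,\rho)}$ into the language of $\Aff(eV)$, then to decompose the module under the abelian subgroup $eV \trianglelefteq U(\Aff(eV))$, and finally to use the associates hypothesis to show the resulting character values all coincide.

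First, by the formula \eqref{eq:module.action} describing the inflation of $W_{(\mathcal O,\rho)}$ along $\Phi_e$, a translation $x+b \in \Aff(V)$ (with $a=1$, so $Ra=R\supseteq Re$ trivially) acts on $W^{\sharp}_{(\mathcal O,\rho)}=W_{(\mathcal O,\rho)}$ as the translation $x+eb \in U(\Aff(eV))$. Consequently $P = \sum_{b\in V} P(b)(x+b)$ acts on $W_{(\mathcal O,\rho)}$ as the element $\sum_{b\in V}P(b)(x+eb)$ of $\mathbb C\, U(\Aff(eV))$, so it suffices to analyze its action there.

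Next I would decompose $W_{(\mathcal O,\rho)} = \Ind_{eV\rtimes \Stab_{U(Re)}(\chi)}^{U(\Aff(eV))} \chi\otimes\rho$ as a representation of the normal abelian subgroup $eV$. Since the quotient $U(\Aff(eV))/eV \cong U(Re)$ acts on $\wh{eV}$ with stabilizer of $\chi$ exactly $\Stab_{U(Re)}(\chi)$, a set of coset representatives of $U(Re)/\Stab_{U(Re)}(\chi)$ parametrizes the orbit $\mathcal O$, and a standard coset-representative computation (or the Mackey decomposition) yields
\[
\Res^{U(\Aff(eV))}_{eV} W_{(\mathcal O,\rho)} \;\cong\; \bigoplus_{\chi'\in\mathcal O} \chi'
\]
with each character appearing exactly once (matching the dimension $|\mathcal O|$). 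On the $\chi'$-isotypic line, translation by $eb$ acts as the scalar $\chi'(eb) = (e\chi')(b) = \chi'(b)$, the last equality holding because $\mathcal O\subseteq e\wh V$, cf.\ Proposition~\ref{p:ehatR}. Hence $P$ acts on this line as the scalar $\wh P(\chi') = \sum_{b\in V}P(b)\chi'(b)$.

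The main obstacle, and the place where the hypothesis on $P$ is used, is to verify that this scalar is the same for every $\chi'\in\mathcal O$, so that $P$ acts as a single scalar on all of $W_{(\mathcal O,\rho)}$. Since the orbits of $U(R)$ and $U(Re)$ on $e\wh V$ coincide (Proposition~\ref{p:unit.onto} and the remarks following Proposition~\ref{p:ehatR}), any $\chi'\in\mathcal O$ has the form $u\chi$ with $u\in U(R)$. Changing variables $b'=ub$ (a bijection of $V$ since $u\in U(R)$) in
\[
\wh P(u\chi) = \sum_{b\in V} P(b)\chi(ub) = \sum_{b'\in V} P(u^{-1}b')\chi(b'),
\]
and using that $u^{-1}b'$ and $b'$ are associates so that $P(u^{-1}b')=P(b')$ by hypothesis, yields $\wh P(u\chi)=\wh P(\chi)$. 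Therefore $P$ acts on $W^{\sharp}_{(\mathcal O,\rho)}$ as multiplication by the single scalar $\wh P(\chi)$, completing the proof.
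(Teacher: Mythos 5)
Your proof is correct and the computational core — decomposing the restriction of $W_{(\mathcal O,\rho)}$ to the subgroup of translations via Mackey into the line characters of the orbit $\mathcal O$, and then using the associates hypothesis to show the resulting Fourier transforms all coincide — is essentially the same as the paper's. The only real difference is framing: the paper first observes that $P$ is conjugation-invariant in $U(\Aff(V))$, hence central in $\mathbb{C}U(\Aff(V))$, and invokes Schur's lemma to know \emph{a priori} that $P$ acts by a single scalar on the (still simple) restriction to $U(\Aff(V))$, while you skip this conceptual step and instead verify directly that the per-line scalars agree; both routes are valid and use the associates hypothesis in the same way.
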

\begin{proof}
Note that $P\in \mathbb CU(\Aff(V))$.  Moreover, notice that two translations $g(x)=x+b$ and $h(x)=x+c$ are conjugate in $U(\Aff(V))$ if and only if $b,c$ are associates.  Indeed, conjugating a translation by a translation does nothing. On the other hand, conjugating $g(x)=x+b$ by $h(x)=ux$ with $u\in U(R)$ yields $hgh^{-1}(x) = x+ub$.  Thus $P$ belongs to the center of $\mathbb CU(\Aff(V))$.  Therefore, $P$ acts via a scalar on any simple $\mathbb CU(\Aff(V))$-module by Schur's lemma.

 Let $\Phi_e\colon U(\Aff(V))\to U(\Aff(eV))$ be the canonical homomorphism.  We saw at the beginning of Subsection~\ref{ss:rep.aff}
  that the restriction of $W_{(\mathcal O,\rho)}^\sharp$ to $U(\Aff(V))$ is the inflation of $W_{(\mathcal O,\rho)}$ along $\Phi_e$.  As $\Phi_e$ is surjective by Proposition~\ref{p:extend.proj}, this is a simple $\mathbb CU(\Aff(V))$-module.  In fact, it is the simple module $W_{(\mathcal O,\rho\p_e)}$, where $\p_e\colon U(R)\to U(Re)$ is the projection and we view $\mathcal O$ as an orbit of $U(Re)$ on  $\wh{eV}$, by Proposition~\ref{p:inflate.ind}.  It remains to understand the restriction of $W_{(\mathcal O,\rho\p_e)}$ to $V$ (viewed as the group of translations in $U(\Aff(V))$).

As $U(R)$ is a set of coset representatives for the normal subgroup $V$ of $U(\Aff(V))$, conjugation of a translation $h(x)=x+b$ by a dilation $g(x)=ux$ corresponds to multiplying $b$ by $u$ and $(\chi\otimes \rho\p_e)|_V=\chi$, the Mackey decomposition yields that as a $\mathbb CV$-module $W_{(\mathcal O,\rho\p_e)}$ is the direct sum of the characters in the orbit $\mathcal O$ of $\chi$.   Each of these characters give the same Fourier transformation of $P$ because $P$ is constant on associates.  Thus $P$ acts on $W_{(\mathcal O,\rho\p_e)}$ via scalar multiplication by $\wh P(\chi)$.        This completes the proof.
\end{proof}

We remark that it is almost never the case that $P$ belongs to the center of $\mathbb C\Aff(V)$.  Indeed, if $z(x)\in \Aff(V)$ is the zero mapping, then $zP=z$.  But if $P$ is not a point mass at $0$, then $Pz=\sum_{b\in V}P(b)(0x+b)\neq z$.  Thus we are using in an essential way the observation, implicit in the above proof, that each irreducible representation of $\Aff(V)$ remains irreducible when restricted to the group of units $U(\Aff(V))$ in order to apply Schur's lemma and deduce that $P$ acts as a scalar matrix under irreducible representations of $\Aff(V)$.

Let $p(x,y)\in \mathbb C[x,y]$ be a polynomial and let $P,Q\in \mathbb C\Aff(V)$ with $P$ supported on translations and $Q$ supported on dilations, i.e., $P$ is supported on $V$ and $Q$ is supported on $M(R)$ under the semidirect product decomposition $\Aff(V)= V\rtimes M(R)$.  We further assume that $P$ is constant on associates.  Then we compute the eigenvalues of $A=p(P,Q)$ on the module $\mathbb CV$.  More precisely, we prove the following theorem.

\begin{Thm}\label{t:eigenvalues}
Let $p(x,y)\in \mathbb C[x,y]$ be a polynomial and let $P,Q\in \mathbb C\Aff(V)$ with $P$ supported on translations and $Q$ supported on dilations.  We further assume that $P$ is constant on associates.  Put $A=p(P,Q)$.
Then the eigenvalues for $A$ on $\mathbb CV$ are indexed by triples $(e,\mathcal O,\rho)$ where:
\begin{enumerate}
\item $e\in E(R)$;
\item $\mathcal O=U(R)\chi$ is an orbit of $U(R)$ on \[e\wh{V}\setminus \bigcup_{\substack{Rf\subsetneq Re,\\ f\in E(R)}}f\wh{V},\] that is, $e_{\mathcal O}=e$;
\item and $\rho\in \wh{U(Re)}$ such that $\Stab_{U(Re)}(\chi)\subseteq \ker \rho$.
\end{enumerate}
The corresponding eigenvalue is $p(\wh P(\chi),\wh Q(\rho))$, where
\begin{align*}
\wh P(\chi) &= \sum_{b\in V}P(b)\chi(b)\\
\wh Q(\rho) &= \sum_{Ra\supseteq Re} Q(a)\rho(ae),
\end{align*}
 and it occurs with multiplicity one.
\end{Thm}
\begin{proof}
Let $\mathbb CV=U_0\supsetneq U_1\supsetneq \cdots \supsetneq U_n=0$ be a composition series for $\mathbb CV$ as a $\mathbb C\Aff(V)$-module.  If we choose a basis $B=B_0\cup\cdots\cup B_{n-1}$ for $\mathbb CV$ such that $B_i\subseteq U_i$ projects to a basis for $U_i/U_{i+1}$ for $0\leq i\leq n-1$, then the corresponding matrix representation of $\Aff(V)$ has a block upper triangular form with diagonal blocks $\rho_i$ corresponding to the matrix representation afforded by $U_i/U_{i+1}$ with respect to the basis that is the projection of $B_i$ into $U_i/U_{i+1}$.  It follows that the set of eigenvalues of $A$ on $\mathbb CV$ with multiplicities is the union with multiplicities of the eigenvalues of $\rho_i(A)$ for $1\leq i\leq n-1$, i.e., for the action of $A$ on the composition factors of $\mathbb CV$.   The composition factors of $\mathbb CV$ are described in Theorem~\ref{t:comp.factors}.  So it suffices to show that if $\chi\in \wh{V}$ and $e=e_{\chi}$, then the eigenvalues of $A$ on $W_{(\mathcal O,\mathbf 1_{\Stab_{U(Re)}(\chi)})}^\sharp$, where $\mathcal O$ is the orbit of $\chi$ under $U(R)$, are of the form $p(\wh P(\chi),\wh Q(\rho))$ where $\rho\in \wh{U(Re)}$ with $\Stab_{U(Re)}(\chi)\subseteq \ker \rho$.

By Proposition~\ref{p:add.part},  $P$ acts on $W_{(\mathcal O,\mathbf 1_{\Stab_{U(Re)}(\chi)})}^\sharp$ as scalar multiplication by $\wh P(\chi)$.  It therefore suffices to show that $Q$ is diagonalizable with eigenvalues $\wh Q(\rho)$ where $\rho\in \wh{U(Re)}$ with $\Stab_{U(Re)}(\chi)\subseteq \ker \rho$.  But $Q$ is supported on dilations and the dilation $h(x)=ax$ acts on $W_{(\mathcal O,\mathbf 1_{\Stab_{U(Re)}(\chi)})}^\sharp$ as the element $x\mapsto aex$ of $U(Re)$ if $Ra\supseteq Re$ and as $0$, else.   Proposition~\ref{p:decomp.over.mult} shows that $W_{(\mathcal O,\mathbf 1_{\Stab_{U(Re)}(\chi)})}$ restricts to $U(Re)$ as a direct sum of precisely the linear characters $\rho$ where $\rho\in \wh{U(Re)}$ with $\Stab_{U(Re)}(\chi)\subseteq \ker \rho$ and hence on the corresponding summand $Q$ acts as $\wh Q(\rho)$.   This completes the proof.
\end{proof}

The special case where $P,Q$ are probabilities and $p(x,y) = \alpha x+(1-\alpha)y$ with $0\leq \alpha\leq 1$ for the coin-toss walk and $p(x,y) = xy$ for the  affine walk (cf.~ Subsection~\ref{ss.monoid.random}) yields the following formulation of Theorem~\ref{t:main2}.

\begin{Thm}\label{t:main}
Let $R$ be a finite commutative ring, $V$ a finite $R$-module, $P$ a probability on $V$ that is constant on associates and $Q$ a probability on $R$.
Then the eigenvalues for the transition matrices of both the coin-toss walk and the  affine walk on $V$ are indexed by triples $(e,\mathcal O,\rho)$ where:
\begin{enumerate}
\item $e\in E(R)$;
\item $\mathcal O=U(R)\chi$ is an orbit of $U(R)$ on \[e\wh{V}\setminus \bigcup_{\substack{Rf\subsetneq Re,\\ f\in E(R)}}f\wh{V};\]
\item and $\rho\in \wh{U(Re)}$ such that $\Stab_{U(Re)}(\chi)\subseteq \ker \rho$.
\end{enumerate}
The corresponding eigenvalue for the coin-toss walk is $\alpha\wh P(\chi)+(1-\alpha)\wh Q(\rho)$ and for the affine walk is $\wh P(\chi)\wh Q(\rho)$ (where $\chi\in \mathcal O$) where
\begin{align*}
\wh P(\chi) &= \sum_{b\in V}P(b)\chi(b)\\
\wh Q(\rho) &= \sum_{Ra\supseteq Re} Q(a)\rho(ae).
\end{align*}
 This eigenvalue occurs with multiplicity one.
\end{Thm}

To see that Theorem~\ref{t:main2} is a reformulation of Theorem~\ref{t:main}, we need some further preliminaries. Let $W$ be a finite $R$-module. If $v\in W$, then \[\mathrm{ann}(v)=\{a\in R\mid av=0\}\] is the \emph{annihilator} of $v$; it is an ideal of $R$.  Note that the annihilators of $v$ and the cyclic submodule $Rv$ are the same and $Rv\cong R/\ann(v)$ as an $R$-module.  In particular, $Rv\cong Rw$ if and only if $\ann(v)=\ann(w)$.

\begin{Prop}\label{p:ann.ideal}
Let $v\in V$ and $e\in E(R)$.  Then $ev=v$ if and only if $1-e\in E(\ann(v))$.	
\end{Prop}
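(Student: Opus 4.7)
The plan is essentially a one-line verification unpacking the definitions. First I would observe that $1-e$ is automatically an idempotent of $R$ whenever $e \in E(R)$, since $(1-e)^2 = 1 - 2e + e^2 = 1 - 2e + e = 1 - e$. So membership of $1-e$ in $E(\ann(v))$ reduces to membership of $1-e$ in the ideal $\ann(v)$, i.e., to the equation $(1-e)v = 0$.

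Next I would expand this equation: $(1-e)v = v - ev$, so $(1-e)v = 0$ if and only if $v = ev$. Chaining these two equivalences gives the proposition.

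There is no real obstacle here; the statement is a definitional unwinding. The only thing worth flagging is the minor subtlety that $E(\ann(v))$ is the set of idempotents \emph{of $R$} that lie in $\ann(v)$ (since $\ann(v)$ is an ideal, not a unital ring with its own identity), so one just needs the two conditions ``idempotent'' and ``annihilates $v$'' to be verified separately, both of which are immediate.
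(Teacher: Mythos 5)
Your proof is correct and amounts to exactly the definitional unwinding the paper has in mind (the paper's own proof is simply ``This is obvious''): $1-e$ is idempotent since $e$ is, and $(1-e)v = v - ev$ vanishes precisely when $ev = v$. Nothing further is needed.
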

\begin{proof}
This is obvious.	
\end{proof}

Recall that $E(R)$ is a Boolean algebra with respect to the ordering $e\leq f$ if $ef=e$.  The  operations are given by $e\wedge f=ef$, $e\vee f=e+f-ef$ and $\neg e=1-e$.  The mapping $e\mapsto Re$ is an isomorphism of Boolean algebras.

As $ev=v$ and $fv=v$ implies $efv=v$, it follows that there is a minimal idempotent $e_v$ with $e_vv=v$.  As $e\mapsto 1-e$ is an order reversing involution on $E(R)$, it follows from Proposition~\ref{p:ann.ideal} that $e=e_v$ if and only if $1-e$ is the maximal idempotent in $\mathrm{ann}(v)$ (which is a join subsemilattice as $0\in \mathrm{ann}(v)$ and $e\vee f= e+f-ef$).  In particular, $\mathrm{ann}(v)=\mathrm{ann}(w)$ implies $e_v=e_w$.  Also note that $Rv=Rw$ implies that $\mathrm{ann}(v)=\mathrm{ann}(w)$, whence $e_v=e_w$.

\begin{Prop}\label{p:ann.stuff}
Let $v\in W$.
\begin{enumerate}
\item The natural mapping $\pi_v\colon U(R)\to U(R/\ann(v))$ is surjective and $\pi_v(r)=\pi_v(re_v)$ for all $r\in R$.
\item Let $\rho\in \wh{U(R)}$.  Then $(1+\ann(v))\cap U(R)\subseteq \ker \rho$ if and only if $\rho$ factors through $\pi_v$.
\item If $a\in Re_v$, then $a\in U(Re_v)$ if and only if $a+\ann(v)\in U(R/\ann(v))$.
\end{enumerate}	
\end{Prop}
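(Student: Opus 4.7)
Parts (1) and (2) are essentially immediate from the earlier development, so I would dispatch them first and then concentrate on (3). For (1), the surjectivity of $\pi_v$ is the observation recorded just after Proposition~\ref{p:cyclic.sub}, applied with $I=\ann(v)$. For the identity $\pi_v(r)=\pi_v(re_v)$ (reading $\pi_v$ as the induced quotient map $R\to R/\ann(v)$), one writes $r-re_v=r(1-e_v)$ and invokes Proposition~\ref{p:ann.ideal} applied to $e_vv=v$ to conclude $1-e_v\in\ann(v)$; then $r(1-e_v)\in\ann(v)$ since $\ann(v)$ is an ideal. Part (2) is purely formal: by construction $\ker\pi_v=(1+\ann(v))\cap U(R)$, and since $\pi_v$ is surjective, a character $\rho\in\wh{U(R)}$ factors through $\pi_v$ if and only if it is trivial on $\ker\pi_v$.

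The substantive assertion is (3). The forward direction is standard: the decomposition $R=Re_v\oplus R(1-e_v)$ together with $R(1-e_v)\subseteq\ann(v)$ shows that the composition $Re_v\hookrightarrow R\twoheadrightarrow R/\ann(v)$ is a \emph{surjective} ring homomorphism $\phi\colon Re_v\to R/\ann(v)$ sending $e_v$ to $1$, and so it sends units to units. The converse is the main obstacle: the kernel $\ker\phi=Re_v\cap\ann(v)$ need not sit inside the Jacobson radical of $Re_v$ in any obvious way, so one cannot naively lift a unit from $R/\ann(v)$ to $Re_v$ and expect the lift itself to be a unit.

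The plan for the converse is to use finiteness monoid-theoretically. Given $a\in Re_v$ with $\phi(a)\in U(R/\ann(v))$, the finite commutative submonoid $\langle a\rangle\subseteq Re_v$ contains an idempotent of the form $f=a^n$ for some $n\geq 1$. Then $\phi(f)=\phi(a)^n$ is simultaneously a unit and an idempotent of $R/\ann(v)$, hence equals $1=\phi(e_v)$; so $f-e_v\in\ann(v)$ and in particular $fv=e_vv=v$. Since $f\in Re_v$, we have $e_vf=f$; on the other hand the minimality of $e_v$ among idempotents fixing $v$ gives $e_vf=e_v$. Combining these forces $f=e_v$, so $a^n=e_v$ and $a^{n-1}\in Re_v$ is a two-sided inverse of $a$ in $Re_v$. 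I expect this last chain---extracting the idempotent $f=a^n$, using $\phi(f)=1$ to promote it to $e_v$, and reading off the explicit inverse---to be the only nontrivial step in the proposition.
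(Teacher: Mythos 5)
Your proof is correct, and it lands on the same core technique for part (3)---extract an idempotent power in the finite monoid $M(Re_v)$ and invoke the minimality of $e_v$---but executes it a bit more cleanly than the paper. The paper fixes a lift $b\in Re_v$ of the inverse of $a+\ann(v)$, observes that $ab\equiv e_v \pmod{\ann(v)}$ so that $ab$ fixes $v$, and passes to an idempotent power $(ab)^k$, which minimality then forces to equal $e_v$. You instead work with powers of $a$ alone, take the idempotent $f=a^n$, and use the observation that an idempotent unit of a ring must equal the identity to conclude $\phi(f)=1$, hence $f\equiv e_v \pmod{\ann(v)}$ and thus $fv=v$; from there minimality gives $f=e_v$ exactly as before. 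This avoids the auxiliary lift $b$ entirely, which is a genuine (if modest) streamlining. For part (1), you obtain surjectivity of $\pi_v$ directly from the remark following Proposition~\ref{p:cyclic.sub} applied to $I=\ann(v)$, whereas the paper derives it as a consequence of part (3) together with Proposition~\ref{p:unit.onto}; both routes are legitimate, and yours decouples (1) from (3) in the logical order. Part (2) is handled identically. One small nicety you might add: when $n=1$ the extracted idempotent is $a=e_v$, so the ``inverse $a^{n-1}$'' should be read as $e_v$ itself; this is harmless but worth a word.
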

\begin{proof}
First note that $r-re_v=r(1-e_v)\in \ann(v)$ for all $r\in R$, whence $\pi_v(r)=\pi_v(re_v)$, and so $Re_v\to R/\ann(a)$ is a surjective homomorphism of unital rings. It is enough to show that the natural mapping $U(Re_v)\to U(R/\ann(v))$ is surjective since we can then apply Proposition~\ref{p:unit.onto}.

Suppose that $a+\ann(v)$ is a unit with inverse $b+\ann(v)$ with $a,b\in Re_v$.  Then $ab+\ann(v)=1+\ann(v)=e_v+\ann(v)$ and so $ab=e_v+x$ with $x\in \ann(v)$.  Then $abv=e_vv+xv=v$ and hence $(ab)^nv=v$ for all $n>0$.  As $M(R)$ is a  finite monoid, we have that $(ab)^k\in E(Re_v)$ for some $k>0$.  Minimality of $e_v$ then implies $(ab)^k=e_v$ and so $a\in U(Re_v)$.  This completes the proof of the first item and also the third.

The second item follows from the first and the observation that $\ker \pi_v=(1+\ann(v))\cap U(R)$.
\end{proof}

\begin{proof}[Proof of Theorem~\ref{t:main2}]
By Proposition~\ref{p:cyclic.sub} there is a bijection between orbits of $U(R)$ on $\wh{V}$ and cyclic submodules if $\wh{V}$.  Moreover, $e$ is the minimal idempotent stabilizing $\chi\in \wh V$ if and only if $1-e$ is the maximal idempotent in $\ann(\chi)$ by Proposition~\ref{p:ann.ideal}.  If $e$ is the minimal idempotent stabilizing $\chi$, then $\Stab_{U(Re)}(\chi) = e+\ann(\chi)e$.  By Proposition~\ref{p:ann.stuff}, we have that if $\rho\in \wh{U(Re)}$, then $\ker \rho$ contains $\Stab_{U(Re)}(\chi)$ if and only if $\rho$ factors through $Re\to R/\ann(\chi)$. Thus the triples $(e,\mathcal O,\rho)$ from Theorem~\ref{t:main} correspond bijectively to the pairs $(W,\rho)$ of Theorem~\ref{t:main}.    Moreover, by Proposition~\ref{p:unit.onto} and Proposition~\ref{p:ann.stuff}, we have that $Ra\supseteq Re$ if and only if $ae\in U(Re)$, if and only if $ae+\ann(\chi)=a+\ann(\chi)\in U(R/\ann(\chi))$, if and only if $a\in U(R)+\ann(\chi)$.  It follows that the definitions of $\wh Q(\rho)$ in Theorem~\ref{t:main2} and in Theorem~\ref{t:main} agree (using Proposition~\ref{p:ann.stuff}).
\end{proof}

We now aim to recover the results of the first author and Singla~\cite{AyyerSingla}  for the case when $V=R$  (and hence cyclic submodules are principal ideals) and $P$ is uniform. In this case, one has that $\wh P(\chi)=0$ for any non-trivial character $\chi$ on the additive group of $R$ by the orthogonality relations for characters.  Hence many of the eigenvalues in Theorem~\ref{t:main2} will be the same.  We shall first give a description of the eigenvalues that follows directly from Theorem~\ref{t:main2}.  We shall then reformulate the result to make it apparent that it agrees with the results of~\cite{AyyerSingla} for $V=R$.

\begin{Thm}\label{t:uniform.version.one}
Let $R$ be a finite commutative ring and $V$ a finite $R$-module.
Let $Q$ be a probability on $R$.  Then the eigenvalues for transition matrix of the coin-toss random walk on $V$ with respect to $P$ the uniform distribution and $Q$, with heads probability $\alpha$, are indexed by pairs $([W],\rho)$ where $[W]$ is the isomorphism class of a cyclic submodule of $W$ of $\wh{V}$ and $\rho$ is a character of $U(R)$ factoring through $\pi_W\colon U(R)\to U(R/\ann(W))$.  The corresponding eigenvalue is given by
\[\lambda_{([W],\rho)} = \begin{cases} 1, & \text{if}\ W=0\\ (1-\alpha)\sum_{a\in U(R)+\ann(W)}Q(a)\rho(u_W(a)), & \text{else}\end{cases}
\] where $u_W(a)\in U(R)$ with $u_W(a)+\ann(W)=a+\ann(W)$; it has multiplicity the number of cyclic submodules of $\wh{V}$ isomorphic to $W$.
\end{Thm}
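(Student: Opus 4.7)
The plan is to specialize Theorem~\ref{t:main2} to the uniform distribution on $V$ and then regroup eigenvalues by isomorphism class of the cyclic submodule $W\subseteq \wh V$. First I would compute $\wh P(\chi)$ for uniform $P$: by the first orthogonality relations for characters of the finite abelian group $(V,+)$, the sum $\wh P(\chi)=\frac{1}{|V|}\sum_{b\in V}\chi(b)$ equals $1$ when $\chi$ is the trivial character (equivalently $W=R\chi=0$) and vanishes otherwise. In particular, this value depends only on whether $W$ is zero, so it is constant on the orbits of $U(R)$ on $\wh V$, as required for applying Theorem~\ref{t:main2}.

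Next I would substitute into the coin-toss eigenvalue formula $\alpha\wh P(\chi)+(1-\alpha)\wh Q(\rho)$. For the pair with $W=0$, one has $\ann(W)=R$, so $U(R/\ann(W))$ is the trivial group, $\rho$ is forced to be the trivial character, and $\wh Q(\rho)=\sum_{a\in R}Q(a)=1$; the eigenvalue is therefore $\alpha+(1-\alpha)=1$, recovering the first case in the statement. For $W\neq 0$ the vanishing $\wh P(\chi)=0$ collapses the eigenvalue to $(1-\alpha)\wh Q(\rho)$. I would then rewrite $\wh Q(\rho)$ in the stated form: by Proposition~\ref{p:ann.stuff}, characters $\rho\in \wh{U(R/\ann(W))}$ correspond bijectively to characters of $U(R)$ trivial on $(1+\ann(W))\cap U(R)=\ker\pi_W$, i.e.\ to characters of $U(R)$ factoring through $\pi_W$, and surjectivity of $\pi_W$ provides a unit lift $u_W(a)\in U(R)$ of any class $a+\ann(W)\in U(R/\ann(W))$, so that $\rho(a+\ann(W))=\rho(u_W(a))$.

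Finally I would account for multiplicity. Two cyclic submodules $W_1,W_2$ of $\wh V$ are isomorphic as $R$-modules if and only if $\ann(W_1)=\ann(W_2)$; hence the datum $([W],\rho)$ canonically determines $\ann(W)$ together with a character $\rho$ of $U(R)$ factoring through $\pi_W$, and therefore determines a single scalar $\lambda_{([W],\rho)}$. Since for $W\neq 0$ neither $\wh Q(\rho)$ nor the factorization condition on $\rho$ depends on which representative $W'\in [W]$ is chosen, every cyclic submodule $W'\cong W$ contributes, via Theorem~\ref{t:main2}, the same eigenvalue $\lambda_{([W],\rho)}$ with multiplicity one. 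Summing over representatives $W'$ of $[W]$ yields the claimed total multiplicity equal to the number of cyclic submodules of $\wh V$ isomorphic to $W$. The only mildly delicate point is verifying that $\rho$ can be transported coherently across the isomorphism class, but this is precisely guaranteed by Proposition~\ref{p:ann.stuff}(ii), so no serious obstacle arises.
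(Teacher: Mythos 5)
Your proof is correct and follows essentially the same route as the paper: specialize Theorem~\ref{t:main2} to uniform $P$, evaluate $\wh P(\chi)$ via the first orthogonality relations, and use Proposition~\ref{p:ann.stuff} to rewrite $\wh Q(\rho)$ and to group the eigenvalues by annihilator (equivalently, isomorphism class of the cyclic submodule). The paper compresses this into a one-line proof, and you have merely unwound the same steps, including the correct accounting of multiplicities via the fact that two cyclic submodules of $\wh V$ are isomorphic if and only if they have the same annihilator.
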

\begin{proof}
This follows from Theorem~\ref{t:main2}, the observation that \[\wh P(\chi)=\frac{1}{|V|}\sum_{v\in V}\chi(v)=\langle  1_V,\chi\rangle=\begin{cases}1, & \text{if}\ \chi=\mathbf 1_V\\ 0, &\text{else}\end{cases}\] by the orthogonality relations and from Proposition~\ref{p:ann.stuff}.
\end{proof}

Our next goal is to show that we can work with cyclic submodules of $V$ instead of $\wh{V}$.  This is necessary to recover the result as formulated in~\cite{AyyerSingla}.  We do this by showing that the coin-toss walk with $P$  uniform for $V$ and $\wh{V}$ have the same eigenvalues.

The following result can also be proved via elementary linear algebra (cf.~\cite{DZ}).

\begin{Prop}\label{p:coin.toss.uniform}
Consider the coin-toss random walk on $V$ where $P$ is taken to be the uniform distribution on $R$, $Q$ is any distribution on $R$ and $\alpha$ is the probability of heads.  Let $1=\lambda_1, \lambda_2, \dots, \lambda_k$
be the eigenvalues for the transition matrix of the random walk of the multiplicative monoid $M(R)$ on $V$ driven by $Q$ with multiplicities.
Then the eigenvalues for the transition matrix of the coin-toss walk are
\[
1 =\lambda_1, (1-\alpha)\lambda_2, \dots, (1-\alpha)\lambda_k
\]
with multiplicities.
\end{Prop}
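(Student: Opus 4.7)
The plan is to exploit the very simple structure of $T_P$ when $P$ is uniform, together with the row-stochasticity of $T_Q$, to write the coin-toss transition matrix in a block triangular form. Specifically, when $P$ is uniform on $V$, the transition matrix $T_P$ of the pure translation walk has every entry equal to $1/|V|$, so $T_P = \tfrac{1}{|V|} \mathbf{1} \mathbf{1}^T$. This is a rank-one projection with eigenvalue $1$ on $\mathbb{C}\mathbf{1}$ and eigenvalue $0$ on the hyperplane $\mathbf{1}^\perp = \{v : \mathbf{1}^T v = 0\}$. Meanwhile $T_Q$, being row-stochastic, satisfies $T_Q \mathbf{1} = \mathbf{1}$, so $\mathbb{C}\mathbf{1}$ is a $T_Q$-invariant subspace.

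Next, I would fix a basis of $\mathbb{C}V$ consisting of $\mathbf{1}$ followed by any basis of $\mathbf{1}^\perp$. In this basis, $T_P$ is block diagonal with blocks $(1)$ and the zero $(|V|-1)\times(|V|-1)$ block, while $T_Q$ is block upper triangular of the form
\[
T_Q \;=\; \begin{pmatrix} 1 & c \\ 0 & M \end{pmatrix}
\]
for some row vector $c$ and some $(|V|-1)\times(|V|-1)$ matrix $M$ (the off-diagonal block $c$ need not vanish because $T_Q$ is generally not column-stochastic, but this does not matter). Consequently
\[
T_{\mathrm{CT}} \;=\; \alpha T_P + (1-\alpha) T_Q \;=\; \begin{pmatrix} 1 & (1-\alpha)c \\ 0 & (1-\alpha) M \end{pmatrix}.
\]

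Since both $T_Q$ and $T_{\mathrm{CT}}$ are block upper triangular in this basis, their spectra (with multiplicity) are the union of the spectra of the diagonal blocks. Thus the eigenvalues of $T_Q$ are $1$ together with the eigenvalues of $M$; writing these as $1=\lambda_1,\lambda_2,\ldots,\lambda_k$ identifies the spectrum of $M$ as $\lambda_2,\ldots,\lambda_k$. The spectrum of $T_{\mathrm{CT}}$ is then $1$ together with the eigenvalues of $(1-\alpha)M$, namely $1,(1-\alpha)\lambda_2,\ldots,(1-\alpha)\lambda_k$, which is exactly the claim.

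There is essentially no obstacle here; the only subtlety worth highlighting is that one must not try to make both $T_P$ and $T_Q$ simultaneously block diagonal (they do not commute in general, precisely because $T_Q$ need not preserve $\mathbf{1}^\perp$), but block triangularity is all that is needed since eigenvalues of a block triangular matrix are read off the diagonal blocks regardless of the off-diagonal entries.
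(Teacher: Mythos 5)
Your proof is correct and takes a genuinely different route from the paper. The paper derives Proposition~\ref{p:coin.toss.uniform} as a one-line corollary of Theorem~\ref{t:uniform.version.one}, which in turn rests on the full representation-theoretic machinery of Theorem~\ref{t:main2}: the observation is simply that the multiplication walk is the coin-toss walk with $\alpha=0$, so the eigenvalues read off from the general formula scale by $(1-\alpha)$ except for the pair $(\{0\},\mathbf 1)$. Your argument instead is a self-contained rank-one-update computation: when $P$ is uniform on $V$, $T_P=\tfrac{1}{|V|}\mathbf 1\mathbf 1^{T}$ is the orthogonal projection onto $\mathbb C\mathbf 1$, and row-stochasticity forces $T_Q\mathbf 1=\mathbf 1$, so in any basis extending $\mathbf 1$ by a basis of $\mathbf 1^{\perp}$ both $T_Q$ and $T_{\mathrm{CT}}=\alpha T_P+(1-\alpha)T_Q$ are block upper triangular with matching lower-right block $M$ resp.\ $(1-\alpha)M$. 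Reading the spectrum off the diagonal blocks gives the claim. The paper in fact flags this alternative explicitly in the sentence preceding the proposition (``The following result can also be proved via elementary linear algebra (cf.~\cite{DZ})''), so your argument is the one the authors had in mind as the elementary route. The trade-off: your proof requires nothing beyond linear algebra and applies verbatim to any pair of row-stochastic matrices with $T_P$ the uniform matrix, whereas the paper's derivation, while heavier, exhibits the result as a specialization of the general eigenvalue formula and thereby identifies the eigenvalues $\lambda_i$ with the representation-theoretic data $([W],\rho)$ of Theorem~\ref{t:uniform.version.one}, which is needed to carry the result further (e.g.\ to Theorem~\ref{t:eigenvalues.two}).
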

\begin{proof}
This is immediate from Theorem~\ref{t:uniform.version.one} once we observe that the random walk on $V$ driven by $Q$ is the coin-toss random walk with $\alpha =0$.
\end{proof}

By Proposition~\ref{p:coin.toss.uniform}, to show that the coin-toss walk with $P$ uniform for $V$ and $\wh{V}$ have the same eigenvalues, it suffices to show that the random walks of the multiplicative monoid $M(R)$ on $V$ and $\wh{V}$ driven by $Q$ have the same eigenvalues.

\begin{Prop}\label{p:same.eigenvalues}
Let $V$ be a finite $R$-module and $Q$ a probability on $R$.  Then the transition matrix for the random walk of $M(R)$ on $\wh{V}$ driven by $Q$ is similar to the transpose of the transition matrix of the random walk of $M(R)$ on  $V$ driven by $Q$ and hence both transition matrices have the same eigenvalues.
\end{Prop}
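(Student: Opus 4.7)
The plan is to identify $\mathbb{C}\wh V$ with the linear dual $(\mathbb{C}V)^*$ as $\mathbb{C}M(R)$-modules, and then use the fact that the matrix of an operator on a dual module, written in the dual basis, is the transpose of its matrix on the original module. Everything will then reduce to bookkeeping of transposes, together with the invertibility of the character table.

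First, I would introduce the canonical map $\Psi\colon \mathbb{C}\wh V \to (\mathbb{C}V)^*$ sending a character $\chi$ to the linear functional $\sum_v c_v v \mapsto \sum_v c_v\chi(v)$. In the basis $\wh V$ on the source and the dual basis $\{v^*\}_{v\in V}$ on the target, the matrix of $\Psi$ is the character table $X=(\chi(v))_{\chi\in\wh V,\, v\in V}$, which is invertible by the first orthogonality relation for $\wh V$. It is also $\mathbb{C}M(R)$-equivariant, since the contragredient action on $(\mathbb{C}V)^*$ is given by $(r\cdot \xi)(v)=\xi(rv)$, matching $(r\chi)(v)=\chi(rv)$ by the very definition of the action of $R$ on $\wh V$.

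Next, let $Q_V$ and $Q_{\wh V}$ be the matrices of $Q\in\mathbb{C}M(R)$ acting on $\mathbb{C}V$ in the basis $V$, and on $\mathbb{C}\wh V$ in the basis $\wh V$, respectively. Since the matrix of any operator on $(\mathbb{C}V)^*$ in the dual basis is the transpose of the matrix on $\mathbb{C}V$, equivariance of $\Psi$ translates into the similarity $X Q_{\wh V} X^{-1} = Q_V^T$. By Subsection \ref{ss.monoid.random}, the transition matrices $T_V$ and $T_{\wh V}$ are the transposes of $Q_V$ and $Q_{\wh V}$, so transposing the identity above yields $T_{\wh V} = Q_{\wh V}^T = X^T Q_V X^{-T} = X^T\, T_V^T\, X^{-T}$, exhibiting $T_{\wh V}$ as similar to $T_V^T$. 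The statement about the eigenvalues is then immediate, since any matrix and its transpose share the same characteristic polynomial.

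The main obstacle, such as it is, is purely notational: one must keep straight that the transition matrix is the transpose of the action matrix, and that passage to the dual module introduces a second transpose. No deep input from monoid representation theory is needed beyond the observation that characters of a finite abelian group form a basis of the dual space, i.e.\ that $X$ is invertible.
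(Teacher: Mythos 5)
Your argument is correct and is essentially the paper's own proof: the paper also passes to the dual module $(\mathbb CV)^*\cong\mathbb C^V$ with action $(rf)(v)=f(rv)$, notes that in the indicator-function (dual) basis the matrix of $Q$ is the transpose of its matrix on $\mathbb CV$ (hence the transition matrix on $V$), and that the characters form a second basis in which the matrix of $Q$ is the transpose of the transition matrix on $\wh V$, the change of basis being exactly your character-table conjugation. Your only deviation is making the intertwiner $\Psi$ and the matrix $X$ explicit (up to an immaterial row/column indexing transpose), which changes nothing of substance.
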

\begin{proof}
The vector space dual of $\mathbb CV$, which is a $\mathbb CM(R)$-module,  can be identified with the space of functions $f\colon V\to \mathbb C$ and the dual basis to the basis $V$ of $\mathbb CV$ corresponds to the indicator functions $\delta_v$ of the singleton sets $\{v\}$ with $v\in V$.  The module action of $R$ on $\mathbb C^V$ is given by $(rf)(v)= f(rv)$.  So the matrix of $Q$ acting on $\mathbb C^V$ with respect to the basis of indicator functions is the transpose of the matrix of $Q$ acting of $\mathbb CV$ with respect to the basis $\Omega$ and hence is the transition matrix of the random walk of $M(R)$ on $V$ driven by $Q$ (cf.~ Subsection~\ref{ss.monoid.random}).  But the characters of $V$ also form a basis for $\mathbb C^V$ and the matrix of $Q$ with respect to the basis of characters is the transpose of the transition matrix of the random walk of $M(R)$ on $\wh V$ driven by $Q$.  This completes the proof.
\end{proof}

Using Theorem~\ref{t:uniform.version.one}, Proposition~\ref{p:coin.toss.uniform}, Proposition~\ref{p:same.eigenvalues} and that $\wh {\wh{M}}$ is canonically isomorphic to $M$ as an $R$-module, we then obtain the following theorem.

\begin{Thm}\label{t:eigenvalues.two}
Let $R$ be a finite commutative ring and $V$ a finite $R$-module.
Let $Q$ be a probability on $R$.  Then the eigenvalues for transition matrix of the coin-toss random walk on $V$ with respect to $P$ the uniform distribution and $Q$, with heads probability $\alpha$, are indexed by pairs $([W],\rho)$ where $[W]$ is the isomorphism class of a cyclic submodule of $W$ of $V$ and $\rho$ is a character of $U(R)$ factoring through $\pi_W\colon U(R)\to U(R/\ann(W))$.  The corresponding eigenvalue is
\[\lambda_{([W],\rho)} = \begin{cases} 1, & \text{if}\ W=0\\ (1-\alpha)\sum_{a\in U(R)+\ann(W)}Q(a)\rho(u_W(a)), & \text{else}\end{cases}
\] where $u_W(a)\in U(R)$ with $u_W(a)+\ann(W)=a+\ann(W)$; it has multiplicity the number of cyclic submodules of $V$ isomorphic to $W$.
\end{Thm}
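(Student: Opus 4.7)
The plan is to deduce Theorem~\ref{t:eigenvalues.two} by reducing to Theorem~\ref{t:uniform.version.one} applied to the dual module $\wh V$, and then transporting back along the canonical isomorphism $V\cong \wh{\wh V}$ of $R$-modules.

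First, I would invoke Proposition~\ref{p:coin.toss.uniform} to separate the coin-toss step from the multiplicative step: the eigenvalues of the coin-toss walk on $V$ (with uniform $P$ and heads probability $\alpha$) are just $1$ together with $(1-\alpha)\lambda$ as $\lambda$ ranges over the non-unit eigenvalues of the transition matrix of the multiplicative walk of $M(R)$ on $V$ driven by $Q$, with multiplicities. By Proposition~\ref{p:same.eigenvalues}, the multiplicative walks of $M(R)$ on $V$ and on $\wh V$ driven by $Q$ have similar transition matrices (one is the transpose of the other, up to a change of basis by the character table), and hence identical spectra with multiplicities. Applying Proposition~\ref{p:coin.toss.uniform} in the reverse direction, the coin-toss walk on $V$ with uniform $P$ has the same eigenvalues with the same multiplicities as the coin-toss walk on $\wh V$ with uniform $P$.

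Next, I would apply Theorem~\ref{t:uniform.version.one} to the module $\wh V$ in place of $V$. This describes the eigenvalues of the coin-toss walk on $\wh V$ as being indexed by pairs $([W'],\rho)$, where $[W']$ is an isomorphism class of cyclic submodule of $\wh{\wh V}$ and $\rho$ factors through $\pi_{W'}\colon U(R)\to U(R/\ann(W'))$, with the eigenvalue formula as in the statement and with multiplicity equal to the number of cyclic submodules of $\wh{\wh V}$ isomorphic to $W'$. Since $\wh{\wh V}\cong V$ canonically as an $R$-module, this canonical isomorphism induces a bijection between cyclic submodules of $\wh{\wh V}$ and of $V$ that preserves isomorphism type, annihilator, and counts within each isomorphism class. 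In particular, for a cyclic submodule $W$ of $V$ we have $\ann(W)=\ann(W')$ for the corresponding $W'$, so the definition of the character $\rho$ and the explicit eigenvalue formula in Theorem~\ref{t:uniform.version.one} transfer verbatim.

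The only step that requires any care is verifying that the canonical isomorphism $V\cong\wh{\wh V}$ is an $R$-module isomorphism (and not merely an abelian group one), so that it genuinely restricts to a bijection of cyclic $R$-submodules; this is precisely the statement quoted from~\cite{JWood} near the definition of $\wh V$ in the introduction. Beyond that, the argument is a concatenation of the three propositions and theorem listed above, and no new calculation is needed.
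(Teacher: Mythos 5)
Your proposal is correct and follows essentially the same route the paper takes: the paper's own proof is the single sentence invoking Theorem~\ref{t:uniform.version.one}, Proposition~\ref{p:coin.toss.uniform}, Proposition~\ref{p:same.eigenvalues}, and the canonical $R$-module isomorphism $V\cong\wh{\wh V}$, which is exactly the chain you spell out. Your elaboration — using Propositions~\ref{p:coin.toss.uniform} and \ref{p:same.eigenvalues} to identify the coin-toss spectra on $V$ and $\wh V$, then reading off the indexing from Theorem~\ref{t:uniform.version.one} applied to $\wh V$ and transporting along $\wh{\wh V}\cong V$ — is just a more explicit rendition of the same argument.
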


The special case in which $V=R$ (and hence cyclic modules are principal ideals) recovers~\cite[Theorem 2.3]{AyyerSingla}.

\def\malce{\mathbin{\hbox{$\bigcirc$\rlap{\kern-7.75pt\raise0,50pt\hbox{${\tt
  m}$}}}}}\def\cprime{$'$} \def\cprime{$'$} \def\cprime{$'$} \def\cprime{$'$}
  \def\cprime{$'$} \def\cprime{$'$} \def\cprime{$'$} \def\cprime{$'$}
  \def\cprime{$'$} \def\cprime{$'$}

\end{document}